\newtheorem{theorem}{Theorem}[section]
\newtheorem{lemma}[theorem]{Lemma}
\theoremstyle{definition}
\newtheorem{definition}[theorem]{Definition}
\newtheorem{proposition}[theorem]{Proposition}
\newtheorem{corollary}[theorem]{Corollary}
\theoremstyle{remark}
\newtheorem{remark}[theorem]{Remark}
\numberwithin{equation}{section}
\begin{document}

\title[Presentation of the Iwasawa algebra $\Lambda(G(1))$]{Presentation of the Iwasawa algebra of the first congruence kernel of a semi-simple, simply connected Chevalley group over $\mathbb{Z}_p$.}

\author{Jishnu Ray}
\address{Département de Mathématiques, Université Paris-Sud $11$, Bât. $430$, $91405$ Orsay CEDEX France
}
\email{jishnu.ray@u-psud.fr, jishnuray1992@gmail.com}
 \thanks{The author receives funding from the Université Paris-Sud by the EDMH PhD fellowship of the Université Paris-Saclay.}



\dedicatory{This paper is dedicated to my father, who loves mathematics with all his heart.}


\begin{abstract}
It is a general principle that objects coming from semi-simple, simply connected (split)  groups have explicit presentations like Serre's presentation of semi-simple algebras and Steinberg's presentation of Chevalley groups. In this paper we give an explicit presentation (by generators and relations) of the Iwasawa algebra for the first congruence kernel of a semi-simple, simply connected Chevalley group over $\mathbb{Z}_p$, extending the proof given by Clozel for the group $\Gamma_1(SL_2(\mathbb{Z}_p))$, the first congruence kernel of $SL_2(\mathbb{Z}_p)$ for primes $p>2$ . 
\end{abstract}

\maketitle
\section{Introduction}
Lazard, in his seminal work \cite{Lazard}, II.$2.2$ studied non-commutative Iwasawa algebras for pro-$p$  groups. They are completed group algebras
\begin{equation*}
\Lambda(P)=\varprojlim \mathbb{Z}_p[P/N]
\end{equation*}
where $\mathbb{Z}_p$ is the ring of $p$-adic integers, $P$ is a pro-$p$ group and the inverse limit is taken over the open normal subgroup $N$ of $P$. It seems that explicit description, by generators and relations, of these algebras were inaccessible. However, Serre's presentation of semi-simple algebras and Steinberg's presentation of Chevalley groups \cite{Ser}, \cite{Yale} make us believe that the objects coming from semi-simple split groups have explicit presentation. Indeed, for any odd prime $p$, Clozel in his paper \cite{Laurent} gives an explicit presentation of the Iwasawa algebra of the subgroup of level $1$ of $SL_2(\mathbb{Z}_p)$, which is $\Gamma_1(SL_2(\mathbb{Z}_p))=\ker(SL_2(\mathbb{Z}_p) \rightarrow SL_2(\mathbb{F}_p))$. Notice that as $\Gamma_1(SL_2(\mathbb{Z}_p))\cong \mathbb{Z}_p^3$, its (non-commutative) Iwasawa algebra is isomorphic (as topological $\mathbb{Z}_p$-\textit{module}) to the ring of power series in three variables. The key ingredient in Clozel's proof was to compute the relations between those variables viewing them as elements of the Iwasawa algebra. The case $p=2$ has to be omitted because then  $\Gamma_1(SL_2(\mathbb{Z}_p))$ will have $p$-torsion and thus its Iwasawa algebra is not an integral domain which prevents one from using deep results of Lazard.

Our main result is to give an explicit presentation, by generators and relations, of the Iwasawa algebra for the subgroup of level $1$ of a semi-simple, simply connected, split Chevalley group over $\mathbb{Z}_p$. Lazard, in his paper \cite{Lazard} defines, for a compact locally $\mathbb{Q}_p$-analytic group $H$, a function $\omega:  H -\{1\} \rightarrow (\frac{1}{p-1},\infty) \subset \mathbb{R}$ satisfying certain properties (cf. \cite{Lazard} III.$2.1.2$) and calls it a $p$-valuation. Let $d$ be the dimension of $H$ (as a locally analytic manifold). Lazard also defines an ordered basis of $H$ with respect to the $p$-valuation $\omega$. Those are an ordered sequence of elements $h_1,...,h_d \in H-\{1\}$ such that the following conditions hold: 
\begin{enumerate}
\item $\psi:\mathbb{Z}_p \xrightarrow{\sim} H, (x_1,...,x_d) \longmapsto h_1^{x_1}\cdots h_d^{x_d}$,\\
\item $\omega(h_1^{x_1}\cdots h_d^{x_d})=\min_{1 \leq i \leq d}(\omega(h_i)+val_p(x_i))$,
\end{enumerate}
where the map $\psi$ is a homeomorphism. Let $G$ be a semi-simple, simply connected Chevalley group over $\mathbb{Z}_p$ with a split maximal torus  $T$, $\Phi$ be the root system  of the Lie algebra of $G$ with respect to the Lie algebra of $T$, $\Pi$ be a set of simple roots of $\Phi$. Under a faithful representation of group schemes $G \hookrightarrow GL_n$ over $\mathbb{Z}$, consider $G(1)$-the pullback of the congruence kernel at level $1$ of $GL_n(\mathbb{Z}_p)$. The natural filtration of $G(1)$ by deeper congruence subgroups enables us to define a $p$-valuation $\omega$ on $G(1)$. 

Let us now present a brief overview of the results of our paper.
\begin{itemize}
\item The first result in our paper is to find an ordered basis of $G(1)$ with respect to its $p$-valuation. Such an ordered basis is constructed in theorem \ref{eq:orderedbasisthm}. For the proof we have used the triangular decomposition of $G(1)$ and the assumption that our group $G$ is simply connected. 

\item The second result concerns the presentation of the Iwasawa algebra of $G(1)$ in terms of generators and relations.  Let $\mathcal{A}$ be the universal non-commutative $p$-adic algebra over $\mathbb{Z}_p$ in the variables $\{V_{\alpha},W_{\delta};\alpha \in \Phi,\delta \in \Pi\}$; the ordering being given by the increasing height function on the roots. The algebra $\mathcal{A}$ has a topology given by the filtration by the powers of its unique maximal ideal. The algebra $\mathcal{A}$ is naturally sent to $\Lambda(G(1))$ (cf. section $6$). Computing in $\Lambda(G(1))$, we obtain the relations between the variables in $\mathcal{A}$ precisely given by $(\ref{eq:firstrelation},\ref{eq:secondequation},\ref{eq:thirdequation},\ref{eq:forthrelation})$ and let $\mathcal{R}$ be the closed two-sided ideal generated by the relations in $\mathcal{A}$. Then our main result is (cf. Theorem $\ref{eq:maintheorem}$)

\textbf{\textit{Theorem}}. For $p>2$, the Iwasawa algebra $\Lambda(G(1))$ is naturally isomorphic to $\mathcal{A}/\mathcal{R}$.

To compute the relations between the variables in $\mathcal{A}$, we have used the Steinberg's Chevalley relations for simply connected groups (\cite{Yale}). The advantage of this description of the Iwasawa algebra is not only its simplicity but also the fact that it allows one to do explicit computations about its center. For example, for the group $\Gamma_1(SL_2(\mathbb{Z}_p))$, Clozel used his presentation to show that the center of the Iwasawa algebra of $\Gamma_1(SL_2(\mathbb{Z}_p))$ is composed of the multiple of the Dirac measure at $1$, thus giving a different proof of Ardakov's result (cf. \cite{Ardakov}).
\end{itemize}
Hence the objective of this paper is to solve two problems: firstly finding an ordered basis for the $p$-valuable group $G(1)$, secondly to use it to give an explicit presentation of the Iwasawa algebra of $G(1)$.
 
\textbf{The structure of this paper}. In Section $2$ we go for a quick tour through the notions of $p$-valuable groups and Chevalley groups that we need. In section $3$ we construct an ordered basis of $G(1)$ in theorem \ref{eq:orderedbasisthm}, an alternative proof of which is provided in section $4$ using the theory of uniform pro-$p$ groups (cf. \cite{Analy}). In section $5$ we use Steinberg's relations of Chevalley groups to compute the relations in $\Lambda(G(1))$.  Finally, in section $6$, we first provide an explicit presentation of the Iwasawa algebra of $G(1)$ with coefficients in $\mathbb{F}_p$ (cf. theorem \ref{eq:secondimptheorem}) and then we lift its coefficients to $\mathbb{Z}_p$ to prove our main result, which is theorem \ref{eq:maintheorem}.
\section{Notations and preliminary definitions}
Let $G$ be any abstract group. We recall that an application 
\begin{equation*}
\omega:G \rightarrow \mathbb{R}_+^{*} \cup \{+\infty \}
\end{equation*}
is called a filtration (cf. \cite{Lazard}, II.$1.1.1$) if the following conditions hold for all $x,y \in G$:

\begin{enumerate}
\item $\omega(xy^{-1})\geqslant \min (\omega(x),\omega(y))$,
\item $ \omega(x^{-1}y^{-1}xy)\geqslant \omega(x)+\omega(y)$.
\end{enumerate}

If $\omega$ is such a filtration, then  $G$ is called a filtered group and $\omega(x)$ is called the filtration of the element  $x$. 

For any $v \in \mathbb{R}_+^{*}$, we define the subgroups
\begin{equation*}
G_v=\{x \in G|\omega(x)\geqslant v\} \ \text{and} \ G_{v+}=\{x \in G|\omega(x)> v\}.
\end{equation*}
Then the subgroups $G_v$ satisfy the following three relations:
\begin{enumerate}
\item $G=\cup_{v>0}G_v$,
\item $[G_v,G_{v^{'}}]\subset G_{v+v^{'}}$ for $v,v^{'} \in \mathbb{R}_+^{*}$,
\item $G_v=\cap_{v^{'} \leq v}G_{v^{'}}$ for $v \in \mathbb{R}_+^{*}$,
\end{enumerate}
 where $[G_v,G_{v^{'}}]$ is the commutator subgroup of $G_v$ and $G_{v^{'}}$. Conversely, if there exists a family of subgroups $G_v$ satisfying the above three relations 
then we can define a filtration $\omega$ by the formula 
\begin{center}
$\omega(x)=\sup v$ for $x \in G_v$.
\end{center}
(Cf. Lazard, \cite{Lazard}, II.$1.1.2.4$). Such a filtration $\omega$ is called separated if $\omega(x)=+\infty$ implies $x=1$. For a prime number $p$ and for all $v \in \mathbb{R}_+$, we define

 \begin{center}
 $\varphi(v)=\min(v+1,pv)$.
 \end{center} 
 The filtration $\omega$ is called a $p$-filtration if it verifies the following axiom:
\begin{center}
$\omega(x^p)\geqslant \varphi (\omega(x))$ for $ x \in G$,
\end{center}
that is 
\begin{center}
$\omega(x^p) \geq \min (\omega(x)+1,p\omega(x))$.
\end{center}
In that case, $G$ is said to be a $p$-filtered group with the $p$-filtration $\omega$.

\begin{definition}
  A filtered group $G$ is $p$-valued if, for all $x \in G$, $\omega$ satisfies the following three axioms:
\begin{align*}
\omega(x)&<\infty,  \ x \neq 1,\\
\omega(x)&>(p-1)^{-1},\\
\omega(x^p)&=\omega(x)+1.
\end{align*}
\end{definition}
Such a group $G$ is also called $p$-valuable, and $\omega$ is called a $p$-valuation on $G$ with the convention $\omega(1)=\infty$.  Henceforth we assume that $G$ is a profinite group and $\omega$ is a $p$-valuation on $G$ which defines the topology of $G$. 

The $G_v$'s form a decreasing filtration of $G$, so there exists the unique topology on $G$ (the topology defined by the filtration) such that $G_v$'s form a fundamental system of open neighborhoods of identity. It is the topology defined by $\omega$.
 We put for each $v>0$,
 \begin{center} 
 	gr$_v G :={G_v}/{G_{v+}} $ and gr$(G):=\oplus_{v>0}$ gr$_{v}G$.
 \end{center}
The commutator induces a Lie bracket on gr$(G)$  which gives gr$(G)$ the structure of a  Lie algebra over $\mathbb{F}_p$. The map $P$ defined by  
\begin{equation*}
P:\text{gr}(G) \rightarrow \text{gr}(G), \ P(gG_{v+})=g^pG_{(v+1)+}
\end{equation*}
is an $\mathbb{F}_p$-linear map on gr$(G)$, which gives gr$(G)$ the structure of a graded Lie algebra over $\mathbb{F}_p[P]$ (cf. \cite{Lazard}, III.$2.1.1$). 

\begin{definition} 
 The pair $(G, \omega)$ is called of finite rank if gr$(G)$ is finitely generated over $\mathbb{F}_p[P]$.
 \end{definition}
 
 As gr$(G)$ is torsion free module over $\mathbb{F}_p[P]$ (cf.  \cite{Lie}, remark $25.2$), being finitely generated torsion free module over a principal ideal domain $\mathbb{F}_p[P]$, it  is free over $\mathbb{F}_p[P]$. We  define 
 \begin{center}
 rank($G, \omega ):=$rank$_{\mathbb{F}_p[P]}$gr$G$.
 \end{center}
 For $g_1,...,g_d \in G$,  we consider the continuous map 
 \begin{align}\label{eq:basis}
 \mathbb{Z}_p^d \rightarrow G, \ (x_1,... ,x_d)\mapsto g_1^{x_1} \cdots g_d^{x_d}.
 \end{align}
 The  map  above depends on the order of $g_1,...,g_d$ and hence it is not a group homomorphism. 
 
 \begin{definition} 
 \label{eq:orderedbasis}
 The sequence of elements $(g_1,... ,g_d)$ in $G$ is called an ordered basis of $(G, \omega )$ if the map in (\ref{eq:basis}) is a bijection (and hence, by compactness, a homeomorphism) and
  \begin{equation}\label{eq:equationlazardbasis}
 \omega (g_1^{x_1},\cdots, g_d^{x_d})=\min_{1\leqslant i\leqslant d}(\omega (g_i)+val_p(x_i))\ \text{for} \  x_i\in \mathbb{Z}_p.
 \end{equation}
 \end{definition}
  If $(G, \omega )$ is of finite rank, then the rank of gr$G$ over $\mathbb{F}_p[P]$ is finite. Following \cite{Lie} (proposition $26.5$), we can relate the basis of $G$ to the basis of gr$(G)$ over $\mathbb{F}_p[P]$. In fact, the following lemma holds.
 \begin{lemma}\label{eq:jishnu}
  The sequence $(g_1,...,g_d)$ is an ordered basis of $(G, \omega )$ if and only if $\sigma (g_1),...,\sigma (g_d)$ is a basis of the $\mathbb{F}_p[P]$-module gr$(G)$ where for $g \neq 1$,  $\sigma (g):=gG_{\omega (g)+} \in $ gr$(G)$.
 \end{lemma}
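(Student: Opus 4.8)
The plan is to prove both implications by passing back and forth between the filtration $\omega$ on $G$ and the induced grading on $\mathrm{gr}(G)$, using the explicit shape of the minimum formula (\ref{eq:equationlazardbasis}) together with the action of $P$. Throughout write $v_i = \omega(g_i)$ and recall that $\sigma(g_i) = g_i G_{v_i+} \in \mathrm{gr}_{v_i} G$. The essential dictionary is this: since each $g_i \neq 1$ has finite filtration (as $\omega$ is a $p$-valuation), the element $\sigma(g_i)$ is a nonzero homogeneous element of $\mathrm{gr}(G)$; the operator $P$ acts on it by $P(\sigma(g_i)) = \sigma(g_i^p)$, which sits in degree $v_i + 1$ because $\omega(g_i^p) = \omega(g_i)+1$; and more generally, for $x \in \mathbb{Z}_p$ with $n = \mathrm{val}_p(x)$, the class $\sigma(g_i^x)$ equals $P^n \sigma(g_i)$ up to a nonzero $\mathbb{F}_p$-scalar (the unit part of $x$), living in degree $v_i + n$. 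So a $\mathbb{Z}_p$-linear combination in the exponents of the $g_i$ translates, after taking leading terms, into an $\mathbb{F}_p[P]$-linear combination of the $\sigma(g_i)$.

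For the forward direction, assume $(g_1,\dots,g_d)$ is an ordered basis. First I would show the $\sigma(g_i)$ span $\mathrm{gr}(G)$ over $\mathbb{F}_p[P]$: given a nonzero homogeneous class $\xi = gG_{\omega(g)+}$ of degree $v$, write $g = g_1^{x_1}\cdots g_d^{x_d}$ via the bijection (\ref{eq:basis}); by (\ref{eq:equationlazardbasis}) we have $v = \min_i (v_i + \mathrm{val}_p(x_i))$, and the ``leading'' product of those $g_i^{x_i}$ for which the minimum is attained has leading term in $\mathrm{gr}_v(G)$ equal to $\sum_{i:\,v_i+\mathrm{val}_p(x_i)=v} \bar{u}_i\, P^{\mathrm{val}_p(x_i)}\sigma(g_i)$ for suitable units $u_i$ — here one must be a little careful that distinct indices contributing to the minimum land in the same graded piece and that the product's leading term is the sum of the leading terms, which follows from axiom (2) of a filtration making the grading additive under the group operation. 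This exhibits $\xi$ as an $\mathbb{F}_p[P]$-combination of the $\sigma(g_i)$ modulo something of strictly higher degree, and a standard successive-approximation/completeness argument then produces an exact expression, or alternatively one argues on the finite-dimensional graded pieces directly. For linear independence, suppose $\sum_i f_i(P)\sigma(g_i) = 0$ with $f_i \in \mathbb{F}_p[P]$ not all zero; lifting each monomial $P^n$ to $g_i^{p^n}$ and each $\mathbb{F}_p$-coefficient to $\mathbb{Z}_p$, one builds an element $g_1^{x_1}\cdots g_d^{x_d}$ with all $x_i$ chosen so that $v_i + \mathrm{val}_p(x_i)$ equals the degree of the (assumed homogeneous, after isolating one degree) relation, yet whose filtration is strictly larger than that common value — contradicting (\ref{eq:equationlazardbasis}) unless all the relevant $x_i$ are forced to be $0$, i.e. all $f_i = 0$.

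For the converse, assume $\sigma(g_1),\dots,\sigma(g_d)$ is an $\mathbb{F}_p[P]$-basis of $\mathrm{gr}(G)$. I would first verify the norm formula (\ref{eq:equationlazardbasis}): the inequality $\omega(g_1^{x_1}\cdots g_d^{x_d}) \geq \min_i(v_i+\mathrm{val}_p(x_i))$ is immediate from axiom (1) of a filtration and $\omega(g_i^{x_i}) \geq v_i + \mathrm{val}_p(x_i)$; for equality, the leading-term computation above shows the image in the graded piece of degree $m := \min_i(v_i+\mathrm{val}_p(x_i))$ is $\sum_{i:\,v_i+\mathrm{val}_p(x_i)=m}\bar{u}_i P^{\mathrm{val}_p(x_i)}\sigma(g_i)$, which is nonzero by the assumed $\mathbb{F}_p[P]$-linear independence (the $P$-powers attached to the surviving indices are genuine monomials and the $\bar u_i$ are units), so the filtration is exactly $m$. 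Then I would show (\ref{eq:basis}) is a bijection: injectivity follows because if $g_1^{x_1}\cdots g_d^{x_d} = g_1^{y_1}\cdots g_d^{y_d}$ with $(x_i)\neq(y_i)$, rewriting gives a nontrivial product equal to $1$, hence of infinite filtration, contradicting the just-proved formula (which gives it finite filtration); surjectivity is the successive-approximation argument — given $g\in G$, pick the $x_i$ to match the leading term $\sigma(g)$ in $\mathrm{gr}(G)$ using that the $\sigma(g_i)$ span, replace $g$ by $g\cdot(g_1^{x_1}\cdots g_d^{x_d})^{-1}$ of strictly higher filtration, iterate, and use completeness/compactness of $G$ to sum the resulting $p$-adic series in each coordinate. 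Homeomorphy is then automatic by compactness of $\mathbb{Z}_p^d$ and Hausdorffness of $G$.

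The main obstacle, I expect, is the bookkeeping in the leading-term computation: one must show that for a product $g_1^{x_1}\cdots g_d^{x_d}$ the image in $\mathrm{gr}_m(G)$, where $m$ is the minimal value of $v_i + \mathrm{val}_p(x_i)$, is \emph{exactly} the sum $\sum \bar u_i P^{\mathrm{val}_p(x_i)}\sigma(g_i)$ over the indices achieving the minimum, with no cancellation and no cross-term contributions from the commutators (which live in strictly higher degree by axiom (2) and hence vanish in $\mathrm{gr}_m(G)$). This is really the content of Lazard's ``coordinates of the first kind vs. leading terms'' bookkeeping, and it is where the additivity of the grading under multiplication, the finite-rank hypothesis, and the precise behaviour of $P$ all have to be combined cleanly; the rest of the argument is then a formal consequence, and indeed this is exactly the statement packaged in \cite{Lie}, Proposition $26.5$, which we may invoke.
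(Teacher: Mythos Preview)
Your proposal is correct and in fact goes well beyond what the paper does: the paper gives no proof of this lemma at all, but simply cites \cite{Lie}, Proposition~$26.5$, and your sketch is precisely the argument that underlies that proposition (and you yourself invoke it at the end). So the approaches coincide, with yours being strictly more detailed.
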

In particular a $p$-valuation $\omega$ of $G$ is a $p$-filtration, separated for which gr$_v(G)=0$ for $ v\leqslant (p-1)^{-1}$, and gr$(G)$ is torsion free (cf. the discussion after $2.1.2.3$, chap III of \cite{Lazard}). 

Let $G$ be  a $p$-valued group, complete, with discrete filtration. Let $g_1,... ,g_d$ be an ordered basis. Then $G$ is defined to be $p$-saturated (cf.  \cite{Lazard}, III.2.2.7) if the valuations $\omega(g_i)$ satisfy the following relation:
\begin{equation}
\label{eq:saturated}
(p-1)^{-1}<\omega(g_i)\leqslant p(p-1)^{-1},\ i \in [1,d].
\end{equation}

In the remaining part of this section, we introduce the basic notion of Chevalley groups over $\mathbb{Z}$ that we need throughout the paper (cf. \cite{Chevalley},\cite{Kos}) following section $1$ of \cite{Abe} . Steinberg gives the construction in \cite{Yale}. 

Let $G_{\mathbb{C}}$ be a connected complex semi-simple Lie group, $T_{\mathbb{C}}$ a maximal torus of $G_{\mathbb{C}}$. Let $\mathfrak{g}_{\mathbb{C}},\mathfrak{t}_{\mathbb{C}}$ be the Lie algebras of $G_{\mathbb{C}}$ and $T_{\mathbb{C}}$ respectively. Let $\Phi$ be the  root system of $\mathfrak{g}_{\mathbb{C}}$ with respect to $\mathfrak{t}_{\mathfrak{C}}$, $\Pi=\{\delta_1,...,\delta_l\}$ be a set of simple roots of $\Phi$, $\mathfrak{g}_{\mathbb{Z}}$ be a Chevalley lattice (cf. Theorem $1$ of \cite{Yale}) of $\mathfrak{g}_{\mathbb{C}}$ generated by the Chevalley basis (cf. p.6 of \cite{Yale})   $\{H_{\delta_1},...,H_{\delta_l},X_{\gamma},\gamma  \in \Phi\}$. For each $\gamma \in \Phi$, the element $H_{\gamma}=[X_{\gamma},X_{-\gamma}]$ is contained in the submodule $\mathfrak{t}_{\mathbb{Z}}=\mathfrak{g}_{\mathbb{Z}}\cap \mathfrak{t}_{\mathbb{C}}.$ We then have, by definition of the Chevalley basis,
\begin{flushleft}
$(a)$ $ \gamma(H_{\gamma})=2, \gamma \in \Phi,$\\
$(b)$ if $\gamma_1,\gamma_2$ are roots, then $\gamma_2(H_{\gamma_1})=v-u$, where $v,u$ are the non-negative integers such that $\gamma_2+i\gamma_1$ is a root if and only if $-v \leq i \leq u$, or\\
$(c)$ if $\gamma_1,\gamma_2$ and $\gamma_1+\gamma_2$ are roots, $[X_{\gamma_1},X_{\gamma_2}]=N_{\gamma_1,\gamma_2}X_{\gamma_1+\gamma_2},$ where $N_{\gamma_1\gamma_2}=\pm(v+1)$. 
\end{flushleft}
Let $\rho$ be a faithful representation of $G_{\mathbb{C}}$ in an $n$-dimensional vector space $V$ over $\mathbb{C}$, $d\rho$ the differential of $\rho$ which is a representation of $\mathfrak{g}_{\mathbb{C}}$ in $V$. Then, there exists (cf. p.17 of \cite{Yale}) a free $n$-dimensional, $\mathbb{Z}$-module $V_{\mathbb{Z}}$ generated by $\{v_1,...,v_n\}$ in $V$ which is stable under the action of the universal enveloping algebra $\mathfrak{U}_{\mathbb{Z}}$. We also have $$d\rho(H_{\gamma})v_i=\lambda_i(H_{\gamma});1 \leq i \leq n,\gamma \in \Phi$$ where $\lambda_i$ are linear functions on $\mathfrak{t}_{\mathbb{C}}$ with $\lambda_i(H_{\gamma}) \in \mathbb{Z}$. The base $\{v_1,...,v_n\}$ of $V_{\mathbb{Z}}$ determines the coordinates $x_{ij}(1 \leq i,j \leq n)$ on $GL(V)$ and the restrictions of $x_{ij}$ to $G_{\mathbb{C}}$ generate a subring $\mathbb{Z}[G]$ of the affine algebra $\mathbb{C}[G]$ of $G_{\mathbb{C}}$. The ring $\mathbb{Z}[G]$ has a structure of a Hopf algebra (cf. section $1$ of \cite{Abe}) and defines a group scheme $G$ over $\mathbb{Z}$. Namely,
\begin{center}
$R \rightarrow G(R)=Hom(\mathbb{Z}[G],R)$
\end{center}
is a contravariant functor from the category of commutative unitary rings into the category of groups. We call the group $G$ the Chevalley group scheme associated with $G_{\mathbb{C}}$.

Now, for any $t \in \mathbb{C}$, $x_{\gamma}(t)=exp$ $td\rho(X_{\gamma}),(\gamma \in \Phi),$ is an element of $G_{\mathbb{C}}$ and the coordinates of $x_{\gamma}(t)$ are polynomial functions on $t$ with coefficients in $\mathbb{Z}$. Let $\mathbb{Z}[\zeta]$ be the algebra over $\mathbb{Z}$ generated by one variable $\zeta$. Then there exists a homomorphism of $\mathbb{Z}[G]$ onto $\mathbb{Z}[\zeta]$ which assigns to each $x_{ij}$ the $(i,j)$-coordinate of $x_{\gamma}(\zeta)$. The homomorphism induces an injective homorphism of groups 
\begin{center}
$G_{\gamma}(R)=Hom(\mathbb{Z}[\zeta],R) \rightarrow G(R)=Hom(\mathbb{Z}[G],R)$
\end{center}
We denote also by $x_{\gamma}(t),t \in R$, the element of $G(R)$ corresponding to an element of $G_{\gamma}(R)$ such that $\zeta \rightarrow t$.

Let $P$(resp. $X,P_r$) the additive group generated by the weights of all the representations of $G$ (resp. the weights of $\rho$, the roots of $\mathfrak{g}_{\mathbb{C}})$. Then these are all free abelian groups of rank $l$ such that $P\supseteq X\supseteq P_r$. The group $G$ is said to be simply connected or universal if $P=X$. Henceforth we fix the ring $R$ to be $\mathbb{Z}_p$, $p$ being an odd prime, and we always work with the simply connected group denoted $G$, and $G(\mathbb{Z}_p)$ will denote its $\mathbb{Z}_p$-points.

For $\lambda \in \mathbb{Q}_p^*, \gamma \in \Phi$, we define 
\begin{equation*}
	h_{\gamma}(\lambda):=w_{\gamma}(\lambda)w_{\gamma}(1)^{-1} 
\end{equation*}

where
\begin{equation*}
w_{\gamma}(\lambda):=x_{\gamma}(\lambda)x_{-{\gamma}}(-\lambda^{-1})x_{\gamma}(\lambda).
\end{equation*}	
Given our embedding  $\rho:G \hookrightarrow GL_n$, let us define, for $k \in \mathbb{N}$,
\begin{center}
$\Gamma(k):=\ker(GL_n(\mathbb{Z}_p)\rightarrow GL_n(\mathbb{Z}_p/p^k\mathbb{Z}_p))$
\end{center}
(the $\mathbb{Z}$-structure on $GL_n$ being given by $V_{\mathbb{Z}}$) and $G(k):=G(\mathbb{Z}_p)\cap \Gamma(k)$. Then $G(k)$ is called the $k^{th}$ congruence kernel of $G(\mathbb{Z}_p)$ which satisfies a descending filtration 
$G(1) \supseteq G(2) \supseteq \cdots $.

\section{Ordered basis of $G(1)$}		

Let us define $\omega$, a function on the first congruence kernel $G(1)$, by
\begin{equation}\label{eq:defomega}
\omega(x)=k \ \text{for} \ x \in G(k) \backslash G(k+1).
\end{equation}
We then show in theorem \ref{lemma1.3}  that $\omega$ is a $p$-valuation on $G(1)$. Furthermore, we show in theorem \ref{eq:orderedbasisthm} that $\{x_{\beta}(p),h_{\delta}(1+p),x_{\alpha}(p); \ {\beta} \in \Phi^-,{\delta} \in \Pi, {\alpha} \in \Phi^+\}$  forms an ordered basis for $(G(1), \omega)$, where the order is given by the height function on the roots.

\begin{theorem}
\label{lemma1.3}
The valuation $\omega$ defined in ($\ref{eq:defomega}$) is a $p$-valuation on $G(1)$.
\end{theorem}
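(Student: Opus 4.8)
The plan is to verify directly the three axioms of a $p$-valuation for the function $\omega$ defined by $\omega(x)=k$ when $x\in G(k)\setminus G(k+1)$, using the congruence filtration on $GL_n(\mathbb{Z}_p)$ restricted to $G(\mathbb{Z}_p)$. First I would recall the elementary matrix computations for the congruence subgroups $\Gamma(k)=\ker(GL_n(\mathbb{Z}_p)\to GL_n(\mathbb{Z}/p^k))$: writing an element of $\Gamma(k)$ as $1+p^k M$ with $M\in M_n(\mathbb{Z}_p)$, one checks $(1+p^kM)(1+p^kN)^{-1}\in\Gamma(k)$ so $\Gamma(k)$ is a group; that $[\Gamma(k),\Gamma(j)]\subseteq\Gamma(k+j)$ because $(1+p^kM)(1+p^jN)(1+p^kM)^{-1}(1+p^jN)^{-1}\equiv 1+p^{k+j}(MN-NM)\pmod{p^{k+j+1}}$; and that $(1+p^kM)^p\equiv 1+p^{k+1}M\pmod{p^{k+2}}$ when $p$ is odd and $k\geq 1$ (here one uses the binomial expansion: the term $\binom{p}{1}p^kM=p^{k+1}M$, the term $\binom{p}{2}p^{2k}M^2$ has $p$-adic valuation $\geq 1+2k\geq k+2$, and higher terms have even larger valuation; this is exactly where $p>2$ and $k\geq 1$, equivalently $\omega>1/(p-1)$, are needed). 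Then I would intersect with $G(\mathbb{Z}_p)$, noting that $G(k)=G(\mathbb{Z}_p)\cap\Gamma(k)$ inherits all these properties since $G(\mathbb{Z}_p)$ is a closed subgroup stable under the relevant operations.

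Concretely the verification breaks into the following steps. Step one: the filtration/subgroup axioms, i.e.\ $\omega(xy^{-1})\geq\min(\omega(x),\omega(y))$ and $\omega(x^{-1}y^{-1}xy)\geq\omega(x)+\omega(y)$, both follow from the two displayed congruences above once we know $G(k)$ are genuinely subgroups of $G(\mathbb{Z}_p)$ forming a descending chain with trivial intersection (the latter because $\bigcap_k\Gamma(k)=\{1\}$ in $GL_n(\mathbb{Z}_p)$). Step two: separatedness, $\omega(x)<\infty$ for $x\neq 1$, is immediate from $\bigcap_k G(k)=\{1\}$. Step three: the bound $\omega(x)>(p-1)^{-1}$ holds trivially since $\omega$ takes values in the positive integers and $p\geq 3$ gives $(p-1)^{-1}\leq 1/2<1$. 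Step four, the crucial one: $\omega(x^p)=\omega(x)+1$ for all $x\neq 1$. The inequality $\omega(x^p)\geq\omega(x)+1$ is the congruence $(1+p^kM)^p\equiv 1+p^{k+1}M\pmod{p^{k+2}}$; the reverse inequality $\omega(x^p)\leq\omega(x)+1$ requires showing that if $x\in G(k)\setminus G(k+1)$, so $M\not\equiv 0\pmod p$, then $x^p\notin G(k+2)$, i.e.\ $p^{k+1}M\not\equiv 0\pmod{p^{k+2}}$, which is again clear from the same expansion since the leading term $p^{k+1}M$ survives modulo $p^{k+2}$ precisely because $M\not\equiv 0\pmod p$. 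One small subtlety: one must make sure the ``leading term'' $p^{k+1}M$ of $x^p$ genuinely records membership in $G(k+1)\setminus G(k+2)$ and is not cancelled by the tail $\binom{p}{2}p^{2k}M^2+\cdots$; since $2k\geq k+2$ exactly when $k\geq 2$, for $k=1$ one checks $\binom{p}{2}p^2M^2$ has valuation $\geq p+1\cdot? $ — more carefully, $v_p\binom{p}{2}=1$ so this term has valuation $\geq 1+2=3=k+2$, so it does not interfere modulo $p^{k+2}=p^3$. This confirms the reverse inequality.

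The main obstacle, and the only place any real care is needed, is the $p$-th power computation and keeping track of $p$-adic valuations of the binomial tail: one needs $v_p\!\left(\binom{p}{i}p^{ik}M^i\right)\geq k+2$ for all $2\leq i\leq p$, which uses $v_p\binom{p}{i}=1$ for $1\leq i\leq p-1$, $v_p\binom{p}{p}=0$, and the hypothesis $p$ odd (for $p=2$ the term $\binom{2}{2}p^{2k}M^2=p^{2k}M^2$ has valuation $2k$, which can equal $k+1$ when $k=1$, destroying the identity $\omega(x^2)=\omega(x)+1$ and introducing $2$-torsion — this is the reason the theorem excludes $p=2$). Once these valuation estimates are in hand, every axiom falls out mechanically. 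I would therefore organize the proof as: (i) state the three matrix congruences as a preliminary computation; (ii) deduce that the $G(k)$ form a separated descending filtration by normal subgroups of $G(1)$ satisfying $[G(j),G(k)]\subseteq G(j+k)$; (iii) conclude the filtration and $p$-filtration axioms; (iv) conclude the two remaining $p$-valuation axioms, emphasizing where $p>2$ enters. I expect step (i), and within it the estimate on the binomial tail, to be the technical heart; everything else is formal.
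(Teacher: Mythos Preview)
Your proposal is correct and follows essentially the same route as the paper: both arguments reduce to the $p$-valuation on the ambient congruence filtration $\Gamma(k)\subset GL_n(\mathbb{Z}_p)$ and then restrict to $G(k)=G(\mathbb{Z}_p)\cap\Gamma(k)$. The only difference is that the paper invokes the $p$-valuability of $\Gamma(1)$ as a known fact (citing Schneider, \emph{$p$-adic Lie groups}, p.~171) and then deduces $\omega(x^p)=\omega(x)+1$ on $G(1)$ in one line from the corresponding statement on $\Gamma(1)$, whereas you reprove that fact from scratch via the binomial expansion of $(1+p^kM)^p$; your valuation estimates on the tail terms are correct and are exactly what underlies the cited result.
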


\begin{proof}
First we recall that the function (denoted again by $\omega$) on $\Gamma(1)$, defined as
\begin{center}
$\omega(x)=k$ for $x \in \Gamma(k)\backslash \Gamma(k+1)$
\end{center}
is a $p$-valuation on $\Gamma(1)$ (cf. p.$171$ of \cite{Lie}). Therefore, $\Gamma(k)$ satisfies the following three conditions:

\begin{center}
$\Gamma(1)=\cup_{k\geqslant 1}\Gamma(k)$,\\
$[\Gamma(k),\Gamma(k^{'})]\subset \Gamma_{k+k^{'}}$ for $k,k^{'} \in \mathbb{N}$,\\
$\Gamma(v)=\cap_{w \leq v}\Gamma(w)$ for $v,w \in \mathbb{N}$.
\end{center}
By definition of $G(k)$ we have 
\begin{center}
$G(1)=\cup_{k\geqslant 1}G(k),$\\
$G(v)=\cap_{w \leq v}G(w)$ for $v,w \in \mathbb{N}.$
\end{center}
Also,
\begin{align*}
[G(k),G(k^{'})]&\subseteq G(\mathbb{Z}_p)\cap [\Gamma(k),\Gamma(k^{'})],\\
&\subseteq G(\mathbb{Z}_p)\cap \Gamma(k+k^{'}),\\
&=G(k+k^{'}).
\end{align*}
This shows that (by section $2$) $\omega$ is a filtration on $G(1)$. Obviously, if $x\in G(1)$ then $\omega(x)=+\infty$ implies $x \in \cap_k G(k) \subset \cap_k\Gamma(k)=1$ which in particular shows that $\omega$ is separated. For $x \in G(1), x \neq 1$ we have $\omega(x)<\infty$. The valuation  $\omega(g)$ is strictly greater than $(p-1)^{-1}$ for all $g \in G(1)$. To prove that $\omega(g^p)=\omega(g)+1$, we  use the fact that  $\Gamma(1)$ is $p$-valuable. 

Suppose $\omega(x)=k$ i.e. $x \in G(k)\backslash G(k+1)$ where $G(k)=\Gamma(k)\cap G(\mathbb{Z}_p)$. Since $\Gamma(1)$ is $p$-valuable, $x^p\in \Gamma(k+1)\backslash \Gamma(k+2)$. This implies that $x^p \in G(k+1)\backslash G(k+2)$. Hence, we obtain $\omega(x^p)=\omega(x)+1$. This finishes the proof that $\omega$ is a $p$-valuation on $G(1)$.
\end{proof}

\begin{theorem}
\label{triangulard}
Any element $g \in G(k)$ can be uniquely written as 
\begin{center}
$g=\prod_{\beta \in \Phi^-}x_{\beta}(u_\beta)\prod_{\delta \in \Pi}h_\delta(1+v_{\delta})\prod_{\alpha \in \Phi^+}x_{\alpha}(w_{\alpha})$,
\end{center}
where $u_{\beta},w_{\alpha},v_{\delta} \in p^k\mathbb{Z}_p$. The order of the above product is given by the height function on the roots.
\end{theorem}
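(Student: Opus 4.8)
The plan is to obtain the factorization from the analogous "big cell" decomposition over the field $\mathbb{Q}_p$ and then track congruences. Recall that for a simply connected Chevalley group the product map
\[
\prod_{\beta \in \Phi^-} x_\beta \;\times\; T \;\times\; \prod_{\alpha \in \Phi^+} x_\alpha \;\longrightarrow\; G
\]
is an open immersion onto the big cell $\Omega = U^- \, T \, U^+$, and is an isomorphism of schemes onto its image; in particular it induces a bijection on $R$-points for any local ring $R$ once one knows the relevant point lies in $\Omega(R)$. Over $\mathbb{Z}_p$ the torus $T$ is split, so $T(\mathbb{Z}_p) \cong (\mathbb{Z}_p^\times)^l$ via the cocharacters $h_{\delta_1}, \dots, h_{\delta_l}$, and every element of $T(\mathbb{Z}_p)$ congruent to $1$ mod $p^k$ is uniquely $\prod_\delta h_\delta(1+v_\delta)$ with $v_\delta \in p^k\mathbb{Z}_p$ (here one uses that the $h_{\delta_i}$ form a $\mathbb{Z}$-basis of the cocharacter lattice, which is exactly where simple connectedness enters). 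Similarly $U^\pm(\mathbb{Z}_p)$ is, via the root group coordinates in any fixed order of heights, just an affine space $\mathbb{Z}_p^{|\Phi^\pm|}$, and the condition of lying in $\Gamma(k)$ translates, coordinate by coordinate, into the coordinates lying in $p^k\mathbb{Z}_p$.

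First I would prove the decomposition for $k=1$, i.e.\ for $G(1)$. Given $g \in G(1)$, reduce mod $p$: $\bar g = 1$ in $G(\mathbb{F}_p)$, so in particular $\bar g$ lies in the big cell $\Omega(\mathbb{F}_p)$ (it is the identity, which is a $\mathbb{Z}_p$-point of $\Omega$). Since $\Omega$ is open in $G$ and $G(1)$ is the preimage of $1$ under reduction, $g \in \Omega(\mathbb{Z}_p)$: indeed $g$ reduces into $\Omega(\mathbb{F}_p)$ and $\Omega(\mathbb{Z}_p) = \{x \in G(\mathbb{Z}_p) : \bar x \in \Omega(\mathbb{F}_p)\}$ because $\Omega$ is open and $\mathbb{Z}_p$ is Henselian. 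Hence $g$ has a unique expression $g = u^- \, t \, u^+$ with $u^\pm \in U^\pm(\mathbb{Z}_p)$, $t \in T(\mathbb{Z}_p)$. Now read the reduction mod $p$: $\bar u^- \bar t \bar u^+ = 1$ in the big cell of $G(\mathbb{F}_p)$, and by uniqueness of the decomposition over $\mathbb{F}_p$ each factor is trivial, i.e.\ $u^\pm \in U^\pm(1)$ and $t \equiv 1 \bmod p$. Translating into coordinates via the root-group parametrizations and the torus parametrization gives $u_\beta, w_\alpha \in p\mathbb{Z}_p$ and $v_\delta \in p\mathbb{Z}_p$, which is the statement for $k=1$. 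Uniqueness is inherited from the uniqueness of the big-cell decomposition.

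For general $k$ the same argument runs verbatim once one observes $G(k) \subseteq G(1) \subseteq \Omega(\mathbb{Z}_p)$, so any $g \in G(k)$ still has its unique big-cell decomposition $g = u^- t u^+$ with coordinates $u_\beta, w_\alpha, v_\delta$; it remains to see these coordinates lie in $p^k\mathbb{Z}_p$. For this I would argue by induction on $k$, or directly: the coordinates $x_{ij}(g) - \delta_{ij}$ of $g$ in the ambient $GL_n$ lie in $p^k\mathbb{Z}_p$, and expressing $g$ through the big-cell coordinates shows that $(u_\beta, v_\delta, w_\alpha) \mapsto$ these matrix entries is a polynomial map which is, after reduction, the identity at the linear level (its differential at the origin is an isomorphism onto the tangent space, by the open-immersion property); hence modulo $p^k$ the map is "triangular unipotent" in the height ordering and one solves for the coordinates recursively in order of height, each step forcing the next coordinate into $p^k\mathbb{Z}_p$. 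The main obstacle is precisely this last bookkeeping step: making the claim "the coordinates lie in $p^k\mathbb{Z}_p$" rigorous requires a careful use of the height ordering together with the fact that in the product $\prod x_\beta(u_\beta)\prod h_\delta(1+v_\delta)\prod x_\alpha(w_\alpha)$, when one collects the matrix entries, the "leading term" in each weight space isolates exactly one of the $u_\beta$, $v_\delta$, or $w_\alpha$ with a unit coefficient, the remaining contributions being products of already-controlled coordinates — this is where Steinberg's commutator relations (the $N_{\gamma_1\gamma_2}$ being $\pm$ integers, and the relevant structure constants being units since $p$ is odd and the group is simply connected) are used to guarantee no denominators and no unexpected $p$-divisibility appears. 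Everything else is a formal consequence of the big-cell decomposition and Henselianity of $\mathbb{Z}_p$.
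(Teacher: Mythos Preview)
Your approach is correct but takes a genuinely different route from the paper's. The paper does not argue from the big-cell open immersion at all; it simply cites a result of Abe (Corollary~3.3 and \S3.21 of \cite{Abe}) which gives directly the triangular decomposition $G(k)=V(\mathbb{Z}_p,p^k\mathbb{Z}_p)\,T(\mathbb{Z}_p,p^k\mathbb{Z}_p)\,U(\mathbb{Z}_p,p^k\mathbb{Z}_p)$ of each congruence kernel, then invokes Steinberg's Lemma~28 (together with simple connectedness) to write the torus factor uniquely as $\prod_i h_{\delta_i}(1+v_{\delta_i})$, and Proposition~2.5 of \cite{Abe} for the unique ordered-product expression in $U^{\pm}$. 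Your argument is more self-contained and is clean for $k=1$, but your handling of general $k$ is harder than necessary: the ``bookkeeping'' you flag as the main obstacle evaporates if you simply rerun the $k=1$ argument with $\mathbb{Z}/p^k\mathbb{Z}$ in place of $\mathbb{F}_p$. Since $\mathbb{Z}/p^k\mathbb{Z}$ is a local ring, the big-cell map $U^-\times T\times U^+\to\Omega$ is still a bijection on $\mathbb{Z}/p^k\mathbb{Z}$-points, and $g$ reduces to the identity there; uniqueness then forces each factor to reduce to the identity, i.e.\ all coordinates lie in $p^k\mathbb{Z}_p$, with no recursive matrix-entry analysis needed. (Incidentally, Henselianity plays no role: for an open subscheme $\Omega\subset G$ and any local ring $R$, an $R$-point of $G$ lies in $\Omega$ iff its closed point does, since open subschemes are stable under generization.) What the citation to Abe buys the paper is that all the scheme-theoretic facts about the big cell over $\mathbb{Z}$ come already packaged at the level of congruence subgroups.
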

\begin{proof}
Let us define
\begin{align*}
U(\mathbb{Z}_p,p^k\mathbb{Z}_p)&:=\langle x_{\alpha}(t);t \in p^k\mathbb{Z}_p, \alpha \in \Phi^+\rangle,\\
V(\mathbb{Z}_p,p^k\mathbb{Z}_p)&:=\langle x_{\beta}(v);v \in p^k\mathbb{Z}_p, \beta \in \Phi^- \rangle,\\
T(\mathbb{Z}_p,p^k\mathbb{Z}_p)&:=\langle h_{\gamma}(u);u \in 1+p^k\mathbb{Z}_p, \gamma \in \Phi \rangle.
\end{align*}
From corollary $3.3$ and section $3.21$ of \cite{Abe} we have a unique triangular decomposition
\begin{align}\label{eq:triangular}
G(k)=V(\mathbb{Z}_p,p^k\mathbb{Z}_p)T(\mathbb{Z}_p,p^k\mathbb{Z}_p)U(\mathbb{Z}_p,p^k\mathbb{Z}_p).
\end{align}

Proposition $2.5$ (and the discussion following it) of \cite{Abe} gives that each element of $U(\mathbb{Z}_p,p^k\mathbb{Z}_p)$ can be written uniquely in the form $\prod_{\alpha \in \Phi^+}x_{\alpha}(t_i), t_i \in p^k\mathbb{Z}_p$, where the product is taken over all the positive roots with the order according to increasing height, the uniqueness  criterion follows from p.$26$-corollary $2$ of \cite{Yale}. Because of symmetry the similar statement will also hold for $V(\mathbb{Z}_p,p^k\mathbb{Z}_p)$. 

For $h_{\gamma}(u) \in T(\mathbb{Z}_p,p^k\mathbb{Z}_p),\gamma \in \Phi$, lemma $28$ and the proof of corollary $5$, p.$44$ of \cite{Yale} give that there exist integers $n_i$ such that 
\begin{center}
$h_{\gamma}(u)=\prod_{i=1}^lh_{\delta_i}(u)^{n_i}=\prod_{i=1}^lh_{\delta_i}(u^{n_i})$,
\end{center}
where $\delta_1,\cdots ,\delta_l$ are the simple roots, $l$ being the cardinality of the set of simple roots. The above expression is unique because our $G$ is simply connected (cf. Corollary of lemma $28$ p.$44$ of \cite{Yale}). Thus by $\ref{eq:triangular}$, each element $g \in G(k)$ can be uniquely written as 
\begin{center}
$g=\prod_{{\beta} \in \Phi^-}x_{\beta}(u_{\beta})\prod_{{\delta} \in \Pi}h_{\delta}(1+v_{\delta})\prod_{{\alpha} \in \Phi^+}x_{\alpha}(w_{\alpha})$,
\end{center}
where $u_{\beta},w_{\alpha},v_{\delta} \in p^k\mathbb{Z}_p$. The order of the above product is given by the height function on the roots.
\end{proof}
\begin{remark}
The paper \cite{Abe} by Abe gives actually the decomposition 
\begin{center}
$G(k)=U(\mathbb{Z}_p,p^k\mathbb{Z}_p)T(\mathbb{Z}_p,p^k\mathbb{Z}_p)V(\mathbb{Z}_p,p^k\mathbb{Z}_p)$.
\end{center}
Its proof uses proposition $1$ of \cite{Chevalley} and by just following the proof of corollary $3.3$ of \cite{Abe}, one can easily show that there is no harm if we interchange the places of $V(\mathbb{Z}_p,p^k\mathbb{Z}_p)$ and $U(\mathbb{Z}_p,p^k\mathbb{Z}_p)$ in the above decomposition.
\end{remark}

\begin{theorem}
\label{eq:th1.3.3.3}
Let $g \in G(k)$. Then using the decomposition given by theorem $\ref{triangulard}$,  if 
\begin{center}
$g=\prod_{{\beta} \in \Phi^-}x_{\beta}(u_{\beta})\prod_{{\delta} \in \Pi}h_{\delta}(1+v_{\delta})\prod_{{\alpha} \in \Phi^+}x_{\alpha}(w_{\alpha})$,
\end{center}
then 
\begin{center}
$\omega(g)=\min_{\{{\beta} \in \Phi^-,{\alpha} \in \Phi^+, {\delta} \in \Pi\}}\{val_p(u_{\beta}),val_p(v_{\delta}), val_p(w_{\alpha})\}$.
\end{center}
\end{theorem}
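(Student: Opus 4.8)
The plan is to show the two inequalities separately. Write $k_0 = \min\{val_p(u_\beta), val_p(v_\delta), val_p(w_\alpha)\}$ and let us prove $\omega(g) = k_0$. First I would establish the easy direction $\omega(g) \geq k_0$: if $k_0 \geq m$, then every $u_\beta, w_\alpha \in p^m\mathbb{Z}_p$ and every $v_\delta \in p^m\mathbb{Z}_p$, so each factor $x_\beta(u_\beta)$, $h_\delta(1+v_\delta)$, $x_\alpha(w_\alpha)$ lies in the corresponding subgroup of $G(m)$ coming from Theorem \ref{triangulard} (applied with $k=m$), hence $g \in G(m)$; letting $m = k_0$ gives $\omega(g) \geq k_0$. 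This uses only that the congruence subgroups $x_\gamma(p^m\mathbb{Z}_p)$ and $h_\delta(1+p^m\mathbb{Z}_p)$ sit inside $G(m)$, which is immediate from the definition of $G(m) = G(\mathbb{Z}_p) \cap \Gamma(m)$ and the fact that $x_\gamma(t)$ and $h_\delta(1+t)$ are congruent to the identity mod $p^m$ when $t \in p^m\mathbb{Z}_p$ (the entries of $d\rho(X_\gamma)$ and of the torus elements are $p$-adic integers, so the exponential series and the Chevalley formulas for $h_\delta$ reduce to the identity modulo $p^m$).

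For the reverse inequality $\omega(g) \leq k_0$, I would argue by contradiction: suppose $g \in G(k_0+1)$. By the \emph{uniqueness} clause of Theorem \ref{triangulard} applied at level $k_0+1$, the element $g$ has a triangular decomposition with all parameters in $p^{k_0+1}\mathbb{Z}_p$; comparing with the given decomposition and invoking uniqueness again (now at level $1$, say, where both decompositions are valid), we conclude that $u_\beta, w_\alpha, v_\delta \in p^{k_0+1}\mathbb{Z}_p$ for all indices, contradicting the definition of $k_0$ as the minimal valuation. Hence $g \notin G(k_0+1)$, and combined with $g \in G(k_0)$ from the first paragraph we get $\omega(g) = k_0$.

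The one point that genuinely needs care — and which I expect to be the main obstacle — is the compatibility of the triangular decompositions at different levels, i.e.\ that the decomposition of $g$ as an element of $G(1)$ (with parameters in $p\mathbb{Z}_p$) is literally the \emph{same} decomposition one gets by regarding $g \in G(k_0)$ with parameters in $p^{k_0}\mathbb{Z}_p$. This follows because Theorem \ref{triangulard} gives a decomposition of any $g \in G(1)$ that is unique among decompositions with parameters in $p\mathbb{Z}_p$, and $p^{k_0}\mathbb{Z}_p \subseteq p\mathbb{Z}_p$, so a decomposition valid at level $k_0$ is a fortiori a decomposition at level $1$ and must coincide with it. The same reasoning handles level $k_0+1$ versus level $1$. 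Once this is pinned down, the whole statement reduces to: the exponents in the (globally unique) triangular decomposition of $g$ detect exactly which congruence subgroup $g$ belongs to, which is precisely what the two inequalities above say. I would also remark that this theorem, together with Theorem \ref{triangulard} and Lemma \ref{eq:jishnu}, is exactly what is needed to verify the ordered-basis property \eqref{eq:equationlazardbasis} for the proposed generators $\{x_\beta(p), h_\delta(1+p), x_\alpha(p)\}$ in the subsequent Theorem \ref{eq:orderedbasisthm}.
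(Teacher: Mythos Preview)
Your proposal is correct and follows essentially the same approach as the paper's proof: both arguments pin down $\omega(g)$ by showing that all parameters lie in $p^{k_0}\mathbb{Z}_p$ (so $g\in G(k_0)$) while not all lie in $p^{k_0+1}\mathbb{Z}_p$ (else $g\in G(k_0+1)$, a contradiction). The paper compresses this into two lines and leaves the compatibility of the triangular decompositions at different levels implicit, whereas you spell out that the uniqueness clause of Theorem~\ref{triangulard} at level~$1$ forces the decompositions at levels $k_0$ and $k_0+1$ to coincide with the given one; this is a genuine clarification rather than a different method.
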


\begin{proof}
Let $g \in G(k)\backslash G(k+1)$ so that $\omega(g)=k$. Then any one of the elements of 
\begin{center}
$\{u_{\beta},v_{\delta},w_{\alpha};{\beta} \in \Phi^-,{\delta} \in \Pi, {\alpha} \in \Phi^+\}$
\end{center}
should belong to $p^k\mathbb{Z}_p$ and not all of them in $p^{k+1}\mathbb{Z}_p$, because if all the elements $u_{\beta},v_{\delta},w_{\alpha} \in p^{k+1}\mathbb{Z}_p$, then $g \in G(k+1)$ and this is a contradiction to our assumption.
\end{proof}
For $m_{\beta},n_{\delta},z_{\alpha} \in \mathbb{Z}_p$ we have
\begin{align*}
&\omega\Big(\prod_{{\beta} \in \Phi^-}x_{\beta}(p)^{m_{\beta}}\prod_{{\delta} \in \Pi}h_{\delta}(1+p)^{n_{\delta}}\prod_{{\alpha} \in \Phi^+}x_{\alpha}(p)^{z_{\alpha}}\Big)\\
&=\omega\Big(\prod_{{\beta} \in \Phi^-}x_{\beta}(pm_{\beta})\prod_{{\delta} \in \Pi}h_{\delta}((1+p)^{n_{\delta}})\prod_{{\alpha} \in \Phi^+}x_{\alpha}(pz_{\alpha})\Big)\\
&=\min_{{\beta},{\alpha},{\delta}}\{1+val_p(m_{\beta}),1+val_p(n_{\delta}),1+val_p(z_{\alpha})\}.
\end{align*}
The first equality follows from p.$30$, and lemma $28$ of \cite{Yale} and the second equality follows from Theorem $\ref{eq:th1.3.3.3}$. 

Since  $\omega(x_{\beta}(p))=\omega(x_{\alpha}(p))=\omega(h_{\delta}(1+p))=1$ and $p>2$, $G(1)$ is also $p$-saturated (cf. $\ref{eq:saturated}$). Hence, with  theorem $\ref{triangulard}$, it follows that:
\begin{theorem}
\label{eq:orderedbasisthm}
The first congruence kernel $G(1)$ is $p$-saturated with an ordered basis 

\begin{center}
$\{x_{\beta}(p),h_{\delta}(1+p),x_{\alpha}(p);{\beta} \in \Phi^-,{\delta} \in \Pi, {\alpha} \in \Phi^+\}$,
\end{center} 
the order being given by the height function on the roots.
\end{theorem}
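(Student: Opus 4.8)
The plan is to check, for the tuple $(g_1,\dots,g_d)=\{x_\beta(p),\,h_\delta(1+p),\,x_\alpha(p)\}$ of cardinality $d=|\Phi|+l$ ordered by increasing height, the two requirements of Definition \ref{eq:orderedbasis} --- bijectivity of the parametrization map and the valuation identity (\ref{eq:equationlazardbasis}) --- and then to verify the saturation inequality (\ref{eq:saturated}); the substantive content is carried entirely by the theorems already in place.

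For bijectivity, write $\psi\colon \mathbb{Z}_p^d \to G(1)$ for $(m_\beta,n_\delta,z_\alpha)\mapsto \prod_{\beta\in\Phi^-}x_\beta(p)^{m_\beta}\prod_{\delta\in\Pi}h_\delta(1+p)^{n_\delta}\prod_{\alpha\in\Phi^+}x_\alpha(p)^{z_\alpha}$. Using the one-parameter relation $x_\gamma(p)^{m}=x_\gamma(pm)$ and $h_\delta(1+p)^{n}=h_\delta((1+p)^n)$ (Lemma $28$ of \cite{Yale}), I would factor $\psi$ as the coordinatewise homeomorphism $\mathbb{Z}_p^d \xrightarrow{\ \sim\ } (p\mathbb{Z}_p)^{|\Phi^-|}\times(1+p\mathbb{Z}_p)^{l}\times(p\mathbb{Z}_p)^{|\Phi^+|}$ --- here $m\mapsto pm$ is a homeomorphism of $\mathbb{Z}_p$ onto $p\mathbb{Z}_p$ and $n\mapsto(1+p)^n$ is a homeomorphism of $\mathbb{Z}_p$ onto $1+p\mathbb{Z}_p$, the latter because $1+p$ topologically generates the procyclic group $1+p\mathbb{Z}_p$ for $p$ odd --- followed by the triangular product map $(u_\beta,v_\delta,w_\alpha)\mapsto \prod x_\beta(u_\beta)\prod h_\delta(1+v_\delta)\prod x_\alpha(w_\alpha)$, which is a bijection onto $G(1)=G(\mathbb{Z}_p)\cap\Gamma(1)$ by the uniqueness assertion of Theorem \ref{triangulard} with $k=1$. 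This last step is also where the ordering by increasing height inside each of $V(\mathbb{Z}_p,p\mathbb{Z}_p)$, $T(\mathbb{Z}_p,p\mathbb{Z}_p)$ and $U(\mathbb{Z}_p,p\mathbb{Z}_p)$ gets absorbed. Thus $\psi$ is a continuous bijection, hence a homeomorphism by compactness of $\mathbb{Z}_p^d$.

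The valuation identity is then immediate: it is precisely the computation displayed right after Theorem \ref{eq:th1.3.3.3}. Rewriting $\prod_i g_i^{x_i}$ via the same two relations and invoking Theorem \ref{eq:th1.3.3.3} gives $\omega\big(\prod_i g_i^{x_i}\big)=\min\{1+val_p(m_\beta),\,1+val_p(n_\delta),\,1+val_p(z_\alpha)\}$; since $\omega(x_\beta(p))=\omega(h_\delta(1+p))=\omega(x_\alpha(p))=1$ by (\ref{eq:defomega}) and Theorem \ref{lemma1.3}, the right-hand side equals $\min_{1\le i\le d}\big(\omega(g_i)+val_p(x_i)\big)$, which is (\ref{eq:equationlazardbasis}). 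Hence the tuple is an ordered basis of $(G(1),\omega)$.

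Finally, for $p$-saturation: the filtration of $G(1)$ by the subgroups $G(k)$ is discrete and $G(1)$ is complete, and each generator satisfies $\omega(g_i)=1$, so (\ref{eq:saturated}) reads $(p-1)^{-1}<1\le p(p-1)^{-1}$, valid for every odd prime $p$ (and failing at $p=2$, where $(p-1)^{-1}=1$); combined with Theorem \ref{lemma1.3} this shows $G(1)$ is $p$-saturated. I do not anticipate a genuine obstacle: the only nontrivial ingredient is the bijectivity of $\psi$, which is essentially Theorem \ref{triangulard} together with the elementary fact that $n\mapsto(1+p)^n$ identifies $\mathbb{Z}_p$ with $1+p\mathbb{Z}_p$; the rest is bookkeeping about orderings and valuations. (A variant would be to apply Lemma \ref{eq:jishnu} and exhibit $\sigma(g_1),\dots,\sigma(g_d)$ as an $\mathbb{F}_p[P]$-basis of $\mathrm{gr}(G(1))$, but the direct argument above stays closer to what the preceding theorems set up.)
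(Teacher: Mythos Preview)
Your proposal is correct and follows essentially the same route as the paper: bijectivity of $\psi$ from Theorem \ref{triangulard} (together with the elementary identifications $m\mapsto pm$ and $n\mapsto(1+p)^n$), the valuation identity from Theorem \ref{eq:th1.3.3.3} exactly as in the displayed computation preceding the theorem, and saturation from $\omega(g_i)=1$ with $p>2$. If anything, you are more explicit than the paper about why $n\mapsto(1+p)^n$ surjects onto $1+p\mathbb{Z}_p$, which is precisely the point where Theorem \ref{triangulard} needs to be matched with the ordered-basis parametrization.
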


The group $G(1)$ has a decomposition
\begin{center}
$G(1)=\prod_{\beta \in \Phi^-}N_{\beta}\prod_{\delta \in \Pi}T_{\delta}\prod_{\alpha \in \Phi^+}N_{\alpha}$,
\end{center}

  where 
  
  \begin{align*}
  N_{\beta}&=\langle x_{\beta}(u),u\in p\mathbb{Z}_p\rangle,\\
  N_{\alpha}&=\langle x_{\alpha}(w),w \in p\mathbb{Z}_p \rangle,\\
  T_{\delta}&=\langle h_{\delta}(1+v),v\in 1+p\mathbb{Z}_p\rangle;
  \end{align*} 
  the order of the products is taken according to the height function on the roots. 
  
  Let $\Lambda (G(1))$  be the Iwasawa algebra of  $\mathbb{Z}_p$-valued measures on $G(1)$.  It can also be thought as distributions on $G(1)$ in the sense of \cite{Wash}. The  decomposition of $G(1)$ given above yields a decomposition of $\Lambda(G(1))$ as a topological $\mathbb{Z}_p$-module:
  \begin{equation}\label{eq:completedtensor}
  \Lambda(G(1))=\Lambda(N_{-\alpha_{max}}) \hat{\otimes}\cdots \hat{\otimes}\Lambda(T_{\delta}) \hat{\otimes} \cdots \hat{\otimes} \Lambda(N_{\alpha_{max}})
  \end{equation}
 where ${\alpha_{max}}$ is the highest root and the order of the product is taken according to the height function on the roots. The factors of ($\ref{eq:completedtensor}$) are the spaces of distributions on the factors of $G(1)$. If $f$ is a function on $G(1)$ and $U_{\beta},V_{\alpha},W_{\delta}$ distributions on $N_{\beta},T_{\delta},N_{\alpha}$, then
\begin{align}
\label{eq:completed2}
&<U_{-\alpha_{max}}\otimes \cdots \otimes W_{\delta} \otimes \cdots \otimes V_{\alpha_{max}},f>\\
&:=<U_{-\alpha_{max}}\otimes \cdots \otimes W_{\delta} \otimes \cdots \otimes V_{\alpha_{max}},f(u_{-\alpha_{max}}\cdots h_{\delta} \cdots n_{\alpha_{max}})>
\end{align}
where $u_{\beta} \in N_{\beta},h_{\delta} \in T_{\delta}, n_{\alpha} \in N_{\alpha}$ and $f$ is seen as a function on 
\begin{equation*}
 \prod_{\beta \in \Phi^-}N_{\beta}\prod_{\delta \in \Pi}T_{\delta}\prod_{\alpha \in \Phi^+}N_{\alpha}.
 \end{equation*}
  For each factor, $U=N_{\beta},T_{\delta}$ or $N_{\alpha}$ of $G(1)$, $\Lambda(U)$ is naturally sent to $\Lambda(G(1))$, by integrating a function $f \in C(G(1),\mathbb{Z}_p)$ against $\mu \in \Lambda(U)$ on the $U$-factor. This map is  compatible with the convolution product as in \cite{Laurent}.

\section{Alternative proof of Theorem $\ref{eq:orderedbasisthm}$}

In this section we  briefly sketch another proof of theorem $\ref{eq:orderedbasisthm}$ using group theory. Dixon, Sautoy, Mann and Segal in \cite{Analy} describe a subclass of $p$-saturated groups called uniform pro-$p$ groups. It is not clear whether any $p$-saturated group is uniform pro-$p$ (cf. notes at the end of chapter $4$ of \cite{Analy}). In this section, without giving the proofs, we  use the results of \cite{Analy} to provide another proof of theorem $\ref{eq:orderedbasisthm}$.

Let $p>2$ be a prime. Let $G$ be a pro-$p$ group which is topologically finitely generated. Then we say that $G$ is powerful if $G/\overline{G^p}$ is abelian. Moreover, if $G$ is torsion-free, then we say that $G$ is uniform. Note that \cite{Analy} has a different definition for uniform pro-$p$ group, but  Theorem $4.5$ of \cite{Analy} shows that it is equivalent to the definition given above. For uniform pro-$p$ group $G$ we have the following proposition:
\begin{proposition}\label{eq:fact1}
 Let $G$ be a topologically finitely generated uniform pro-$p$ group. Let $G=\overline{\langle a_1,\cdots a_d\rangle}$ such that $d$ is minimum. Then 
\begin{center}
$G=\overline{\langle a_1\rangle}\cdots \overline{\langle a_d\rangle}$  
\end{center}
and the mapping 
\begin{center}
$(\lambda_1,...,\lambda_d) \rightarrow a_1^{\lambda_1}\cdots a_d^{\lambda_d}$
\end{center}
from $\mathbb{Z}_p^d$ to $G$ is a homeomorphism.
\end{proposition}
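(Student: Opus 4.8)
The plan is to deduce everything from the structure theory of uniform pro-$p$ groups in \cite{Analy}, organised around the lower $p$-series $P_1(G)=G\supseteq P_2(G)\supseteq\cdots$, where $P_{i+1}(G)=\overline{P_i(G)^p[P_i(G),G]}$. For $G$ uniform the facts I would invoke are: (i) $P_{i+1}(G)=G^{p^i}=\{g^{p^i}:g\in G\}$ and $P_2(G)=\Phi(G)$ is the Frattini subgroup; (ii) each graded piece $P_i(G)/P_{i+1}(G)$ is an $\mathbb{F}_p$-vector space of dimension $d=d(G)$, and the $p$-th power map induces isomorphisms $P_i(G)/P_{i+1}(G)\xrightarrow{\sim}P_{i+1}(G)/P_{i+2}(G)$; (iii) $[P_i(G),P_j(G)]\subseteq P_{i+j}(G)$; and (iv) $\bigcap_i P_i(G)=1$, with the $P_i(G)$ forming a base of open neighbourhoods of $1$. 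Granting these, a minimal topological generating set $a_1,\dots,a_d$ maps to an $\mathbb{F}_p$-basis of $G/P_2(G)$, and, applying the $p$-power isomorphisms $i-1$ times, the elements $a_1^{p^{i-1}},\dots,a_d^{p^{i-1}}$ map to a basis of $P_i(G)/P_{i+1}(G)$.

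For surjectivity of $\psi:(\lambda_1,\dots,\lambda_d)\mapsto a_1^{\lambda_1}\cdots a_d^{\lambda_d}$ I would approximate a given $g\in G$ successively. Write $\lambda_i=\sum_{k\geq 0}c_i^{(k)}p^k$ with digits $c_i^{(k)}\in\{0,\dots,p-1\}$ chosen inductively so that $g\equiv a_1^{\mu_1^{(n)}}\cdots a_d^{\mu_d^{(n)}}\pmod{P_{n+1}(G)}$, where $\mu_i^{(n)}=\sum_{k<n}c_i^{(k)}p^k$. The base case $n=1$ uses that the $\bar a_i$ span $G/P_2(G)$. For the inductive step, $g\cdot(a_1^{\mu_1^{(n)}}\cdots a_d^{\mu_d^{(n)}})^{-1}$ lies in $P_{n+1}(G)$, and since the images of $a_1^{p^n},\dots,a_d^{p^n}$ form a basis of $P_{n+1}(G)/P_{n+2}(G)$ one corrects it by inserting factors $a_i^{c_i^{(n)}p^n}$; rearranging these new factors to sit beside the existing ones costs only commutators in $[P_1(G),P_{n+1}(G)]\subseteq P_{n+2}(G)$ by (iii), hence is invisible modulo $P_{n+2}(G)$. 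Because $G$ is complete for the $P_n(G)$-filtration and the partial products form a Cauchy sequence converging to $g$, passing to the limit gives $g=a_1^{\lambda_1}\cdots a_d^{\lambda_d}$ with $\lambda_i\in\mathbb{Z}_p$.

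For bijectivity and the topological statement, I would first check that $\psi$ descends to a well-defined map $\bar\psi_n:(\mathbb{Z}/p^n\mathbb{Z})^d\to G/P_{n+1}(G)$: replacing $\lambda_j$ by $\lambda_j+p^n$ multiplies the product by a conjugate of $a_j^{p^n}\in P_{n+1}(G)$, and moving this factor into place again only introduces elements of $[P_{n+1}(G),P_1(G)]\subseteq P_{n+2}(G)\subseteq P_{n+1}(G)$, so the class modulo $P_{n+1}(G)$ is unchanged. The map $\bar\psi_n$ is surjective by the approximation argument above, and both sides have cardinality $p^{nd}$ (the source obviously, the target since $|P_i(G):P_{i+1}(G)|=p^d$ for each $i$ by (ii)), hence $\bar\psi_n$ is bijective. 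Taking the inverse limit over $n$, using $\mathbb{Z}_p^d=\varprojlim(\mathbb{Z}/p^n\mathbb{Z})^d$ and $G=\varprojlim G/P_{n+1}(G)$ from (iv), shows $\psi$ is a continuous bijection; since $\mathbb{Z}_p^d$ is compact and $G$ is Hausdorff, $\psi$ is a homeomorphism. Finally $\overline{\langle a_i\rangle}=\{a_i^\lambda:\lambda\in\mathbb{Z}_p\}\cong\mathbb{Z}_p$, because a nontrivial procyclic pro-$p$ subgroup of a torsion-free group is isomorphic to $\mathbb{Z}_p$ and the set $\{a_i^\lambda:\lambda\in\mathbb{Z}_p\}$ is compact hence closed; so the image of $\psi$ is exactly $\overline{\langle a_1\rangle}\cdots\overline{\langle a_d\rangle}$, yielding the asserted product decomposition.

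The part to be careful about is not a single hard estimate but the bookkeeping: keeping the fixed order of the factors $a_1,\dots,a_d$ throughout the induction and repeatedly verifying that every rearrangement of factors is absorbed one level deeper in the lower $p$-series. Everything reduces to facts (i)--(iv), which are precisely the structure theorems for powerful and uniform pro-$p$ groups in Chapters 3--4 of \cite{Analy} (notably $P_{i+1}(G)=G^{p^i}$ and the $p$-power map being an isomorphism on graded pieces); to be self-contained one would reprove these, but since the section explicitly allows citing \cite{Analy} I would simply invoke them.
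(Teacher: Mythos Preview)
Your proposal is correct. The paper's own ``proof'' is simply the sentence ``See proposition $3.7$ and theorem $4.9$ of \cite{Analy},'' so you have in fact done considerably more than the paper: you have sketched the actual argument underlying those cited results, organised around the lower $p$-series and the successive-approximation/inverse-limit method, which is precisely how \cite{Analy} proceeds.
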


\begin{proof}
See  proposition $3.7$ and theorem $4.9$ of \cite{Analy}.
\end{proof} 
\begin{remark}
\label{eq:labeluniform}
With the hypothesis as in proposition \ref{eq:fact1}, the discussion in section $4.2$ of \cite{Analy} shows that $G$ has an integrally valued $p$-valuation $\omega$ and an Lazard's ordered basis $a_1,...,a_d$ with $\omega(a_i)=1,i\in [1,d]$ (see also the remark after lemma $4.3$ of \cite{Algebras}).
\end{remark}

In section $8.2$ of \cite{Analy}, the authors describe $p$-adic analytic groups. Without giving the definition, we just like to point out that uniform pro-$p$ groups are  $p$-adic analytic groups (cf. example $5$, section $8.17$ of \cite{Analy}).

In the following we define the notion of standard groups (cf. definition $8.2.2$ of \cite{Analy}). 

\begin{definition}
 Let $G$ be a $p$-adic analytic group. Then $G$ is a standard group (of dimension $d$ over $\mathbb{Q}_p$) if
\begin{flushleft}
\label{eq:standard}
(i) the analytic manifold structure on $G$ is defined by a global atlas of the form $\{(G,\psi, d)\}$ where $\psi$ is a homeomorphism of $G$ onto $({p\mathbb{Z}_p})^d$ with $\psi(1)=0$,  \\
(ii) for $j=1,...,d$, there exists $P_j(X,Y)\in \mathbb{Z}_p[[X,Y]]$ such that $$\psi_j(xy^{-1})=P_j(\psi(x),\psi(y))$$ for all $x,y \in G$, where $\psi=(\psi_1,...,\psi_d)$.
\end{flushleft}
\end{definition}

\begin{proposition}\label{eq:fact2}
Let $G$ be a standard group of dimension $d$ over $\mathbb{Q}_p$. Then $G$ is a uniform pro-$p$ group of dimension $d$. 
\end{proposition}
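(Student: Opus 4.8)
The plan is to verify, directly from the definition of a standard group, the three properties characterising a uniform pro-$p$ group in the sense used above: that $G$ is a topologically finitely generated pro-$p$ group, that it is powerful, and that it is torsion-free; the dimension will drop out as $d(G)=d$ along the way.

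First I would unwind the global chart $\psi\colon G\xrightarrow{\sim}(p\mathbb{Z}_p)^d$. Since $\psi(1)=0$ each $P_j$ has zero constant term, and from $\psi_j(xy^{-1})=P_j(\psi(x),\psi(y))$ one reads off, by specialising, an inversion series $\iota_j(Y)=P_j(0,Y)\in\mathbb{Z}_p[[Y]]$ and a multiplication series $m_j(X,Y)=P_j(X,\iota(Y))\in\mathbb{Z}_p[[X,Y]]$, both without constant term. The identities $m_j(X,0)=X_j$ and $m_j(0,Y)=Y_j$ (from $x1=x$, $1y=y$) force the linear part of $m_j$ to be $X_j+Y_j$ and its degree-$2$ part to be bilinear, $\sum_{i,k}b^{(j)}_{ik}X_iY_k$; similarly the linear part of $P_j$ is $X_j-Y_j$. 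Put $G_i:=\psi^{-1}\big((p^i\mathbb{Z}_p)^d\big)$, so $G_1=G$. Since $P_j$ has $\mathbb{Z}_p$-coefficients and no constant term it sends $\big((p^i\mathbb{Z}_p)^d\big)^2$ into $p^i\mathbb{Z}_p$, so each $G_i$ is open, and the same applied to the conjugation series shows each $G_i$ is normal. Reducing $\psi$ modulo $p^{i+1}$ and using that the nonlinear part of $P_j$ takes values in $p^{2i}\mathbb{Z}_p$ on $G_i$, one gets group isomorphisms $G_i/G_{i+1}\cong(\mathbb{Z}/p\mathbb{Z})^d$. Hence $\bigcap_iG_i=\{1\}$, the $G_i$ form a neighbourhood basis of $1$, each $G/G_i$ is a finite $p$-group, and $G=\varprojlim_iG/G_i$ is a compact pro-$p$ group carrying its given topology; moreover $G/G_2$ abelian gives $[G,G]\subseteq G_2$.

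The heart of the argument is the $p$-th power map. Writing $\psi_j(x^n)=[n]_j(\psi(x))$ with $[1]=\mathrm{id}$ and $[n](X)=m(X,[n-1](X))$, an induction (summing the bilinear contributions) shows that $[n]$ has linear part $nX$ and degree-$2$ part $\binom{n}{2}B(X)$, where $B_j(X)=\sum_{i,k}b^{(j)}_{ik}X_iX_k$. This is precisely where $p>2$ enters: $\binom{p}{2}=p(p-1)/2\in p\mathbb{Z}_p$, so writing $[p](X)=pX+\binom{p}{2}B(X)+(\text{terms of degree }\ge 3)$ with $\mathbb{Z}_p$-coefficients and substituting $X=pY$ gives $[p](pY)=p^2\big(Y+pg(Y)\big)$ for some $g\in\mathbb{Z}_p[[Y]]^d$ vanishing to order $\ge 2$ (the degree-$2$ part carries the spare $p$ from $\binom{p}{2}$, a degree-$m$ term carries $p^{m-2}$). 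The map $Y\mapsto Y+pg(Y)$ is a bijection of $\mathbb{Z}_p^d$: injectivity by comparing $p$-adic valuations in $Y-Y'=p\big(g(Y')-g(Y)\big)$, surjectivity by successive approximation. Consequently the $p$-th power map is a bijection $G\xrightarrow{\sim}G_2$; being the restriction of $[p]$, which has no constant term, it is injective with $[p](0)=0$, whence $x\neq 1\Rightarrow x^p\neq 1$, and iterating, $G$ is torsion-free. I expect this formal-group computation — isolating $\binom{p}{2}$ and thereby pushing all nonlinear terms into $p\mathbb{Z}_p$ after rescaling — to be the main obstacle.

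It remains to assemble the conclusion. From $G^p=G_2$ and $[G,G]\subseteq G_2$ we get that the Frattini subgroup is $\Phi(G)=\overline{G^p[G,G]}=G_2$; hence the elements $a_j:=\psi^{-1}(pe_j)$ project to a basis of $G/\Phi(G)\cong(\mathbb{Z}/p\mathbb{Z})^d$, so by the Burnside basis theorem for pro-$p$ groups they topologically generate $G$ and $d(G)=d$. Powerfulness is immediate since $G/\overline{G^p}=G/G_2$ is abelian, and torsion-freeness was shown above. Thus $G$ is a topologically finitely generated, powerful, torsion-free pro-$p$ group with $p>2$, i.e. uniform, of dimension $d(G)=d$. (One could instead quote the structural facts about standard groups from \cite{Analy}, but the route above makes the role of the hypothesis $p>2$ transparent.)
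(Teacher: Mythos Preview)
Your argument is correct. The paper itself gives no proof here but simply cites Theorem~8.31 of \cite{Analy}; what you have written is essentially a self-contained reconstruction of that result. The formal-group computation you isolate---that $\binom{p}{2}\in p\mathbb{Z}_p$ forces every nonlinear term of $[p](pY)/p^2$ into $p\mathbb{Z}_p$, so that $x\mapsto x^p$ becomes a bijection $G\to G_2$---is indeed the heart of the matter, and it makes the standing hypothesis $p>2$ of Section~4 visible (for $p=2$ one has $\binom{2}{2}=1$ and the rescaling fails, which is why \cite{Analy} takes charts valued in $(4\mathbb{Z}_2)^d$ in that case). One minor point: your torsion-freeness step ``$x\neq 1\Rightarrow x^p\neq 1$, and iterating'' tacitly uses that any torsion element of a pro-$p$ group has $p$-power order; this is standard but worth saying. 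Otherwise the deductions $G^p=G_2$, $\Phi(G)=G_2$, $d(G)=d$, and powerfulness are all in order.
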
 

\begin{proof}
See theorem $8.31$ of \cite{Analy}. 
\end{proof}
\begin{lemma}
\label{eq:lemmadixon}
The first congruence kernel $G(1)$ is a $\mathbb{Z}_p$- standard group (of level $1$) with dimension $|\Phi|+|\Pi|$.
\end{lemma}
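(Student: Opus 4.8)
The plan is to verify directly the two conditions (i) and (ii) in the definition of a standard group, using the triangular decomposition of $G(1)$ together with the ordered basis constructed in Theorem \ref{eq:orderedbasisthm}. First I would set $d=|\Phi|+|\Pi|$ and take the coordinate map $\psi$ to be the inverse of the homeomorphism from Lazard's ordered basis: explicitly, for $g\in G(1)$ written uniquely (Theorem \ref{triangulard}) as $g=\prod_{\beta\in\Phi^-}x_\beta(u_\beta)\prod_{\delta\in\Pi}h_\delta(1+v_\delta)\prod_{\alpha\in\Phi^+}x_\alpha(w_\alpha)$ with $u_\beta,v_\delta,w_\alpha\in p\mathbb{Z}_p$, put $\psi(g)=(u_\beta,v_\delta,w_\alpha)_{\beta,\delta,\alpha}\in(p\mathbb{Z}_p)^d$. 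By Theorem \ref{triangulard} this is a bijection onto $(p\mathbb{Z}_p)^d$, by compactness a homeomorphism, and clearly $\psi(1)=0$, which gives (i).

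The substance of the lemma is condition (ii): that each coordinate of $\psi(xy^{-1})$ is given by a power series in $\psi(x),\psi(y)$ with $\mathbb{Z}_p$-coefficients. Here I would invoke the fact that $G\hookrightarrow GL_n$ is a closed embedding of group schemes over $\mathbb{Z}$, so multiplication and inversion on $G(\mathbb{Z}_p)$ are given by polynomials with $\mathbb{Z}_p$-coefficients in the matrix entries $x_{ij}$; on the congruence subgroup $\Gamma(1)$, inversion of $1+pM$ is $\sum_{k\geq 0}(-pM)^k$, a convergent power series with integral coefficients, and similarly for products. It then remains to see that passing between the matrix-entry coordinates on $\Gamma(1)\cap G(\mathbb{Z}_p)$ and the coordinates $(u_\beta,v_\delta,w_\alpha)$ coming from the triangular factorization is itself given, in both directions, by power series over $\mathbb{Z}_p$. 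In one direction this is immediate because $x_\gamma(t)$ and $h_\delta(1+v)$ have matrix entries that are polynomials in $t$ (resp. power series in $v$) over $\mathbb{Z}_p$, by the construction of the Chevalley group scheme recalled in Section 2 and Steinberg's \cite{Yale}. The other direction — extracting the triangular coordinates from a matrix in $G(1)$ — is handled by the uniqueness of the $LU$-type decomposition (Theorem \ref{triangulard}, i.e. corollary 3.3 of \cite{Abe}) combined with the implicit function theorem over $\mathbb{Z}_p$: the factorization map is an analytic isomorphism with everywhere-invertible (indeed unipotent-upper-triangular mod $p$) Jacobian, so its inverse is analytic and, after checking integrality of the linearization at the identity, the inverse is given by power series with $\mathbb{Z}_p$-coefficients.

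Composing these three ingredients — integral power series for inversion/multiplication on $\Gamma(1)$ in matrix coordinates, integral power series from triangular coordinates to matrix coordinates, and integral power series back — yields $P_j(X,Y)\in\mathbb{Z}_p[[X,Y]]$ with $\psi_j(xy^{-1})=P_j(\psi(x),\psi(y))$, establishing (ii). The dimension count $d=|\Phi|+|\Pi|$ matches the cardinality of the ordered basis in Theorem \ref{eq:orderedbasisthm}, and since $G(1)$ is an open subgroup of the $|\Phi|+|\Pi|$-dimensional analytic group $G(\mathbb{Z}_p)$, this is also its dimension as an analytic manifold, so the lemma follows.

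I expect the main obstacle to be the integrality bookkeeping in the second direction of the coordinate change: one must be careful that solving the triangular system for $(u_\beta,v_\delta,w_\alpha)$ in terms of the matrix entries does not introduce denominators. The cleanest way around this is to avoid an explicit inversion altogether and instead argue that $G(1)$ is a standard group because it is, by construction, cut out inside the standard group $\Gamma(1)$ of $GL_n$ by the (integral, $p$-adically closed) equations defining $G$ as a subscheme, so that the ambient standard coordinates restrict to standard coordinates on $G(1)$ after the change of basis witnessed by Theorem \ref{triangulard}; the compatibility with Lazard's $p$-valuation from Theorem \ref{eq:orderedbasisthm} then pins down that the resulting chart has level $1$.
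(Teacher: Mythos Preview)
Your chart $\psi$ via the triangular decomposition of Theorem~\ref{triangulard} is exactly what the paper uses. Two points of divergence are worth noting.

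First, a logical issue: this lemma lives in Section~4, whose sole purpose is to give an \emph{alternative} proof of Theorem~\ref{eq:orderedbasisthm}, so any appeal to Theorem~\ref{eq:orderedbasisthm} here (you invoke it in the opening plan, in the dimension count, and in your fallback argument at the end) would make the section circular. The paper's proof uses only Theorem~\ref{triangulard}, which is logically prior.

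Second, on condition~(ii) the paper takes a much shorter route: rather than passing through matrix coordinates on $\Gamma(1)$ and then inverting, it simply cites Theorem~8 of Steinberg~\cite{Yale}---which already records that the group law on a Chevalley group, written in the unipotent/torus/unipotent coordinates, is given by polynomials over $\mathbb{Z}$---together with Exercise~11 of Chapter~13 of~\cite{Analy}. Your route through $GL_n$ would work in principle, but the step you correctly flag as the obstacle (integrality of the inverse coordinate change via an implicit-function argument) is genuinely the delicate one, and your claim that the relevant Jacobian is ``unipotent upper-triangular mod $p$'' is not self-evident without further argument. The suggested fallback of restricting the standard chart on $\Gamma(1)$ to $G(1)$ does not resolve this either: $\dim\Gamma(1)=n^2$ while $\dim G(1)=|\Phi|+|\Pi|$, so a naive restriction of coordinates is not a chart on the subgroup, and selecting the correct sub-chart brings you back to the same integrality question. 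Steinberg's Theorem~8 is precisely the bookkeeping result that sidesteps all of this.
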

\begin{proof}
By theorem \ref{triangulard} each element $g \in G(1)$ can be uniquely written as 

$\prod_{\beta \in \Phi^-}x_{\beta}(u_\beta)\prod_{\delta \in \Pi}h_\delta(1+v_{\delta})\prod_{\alpha \in \Phi^+}x_{\alpha}(w_{\alpha})$ for some $u_{\beta},v_{\delta},w_{\alpha} \in p\mathbb{Z}_p$. The proof then follows from theorem $8$ of \cite{Yale} and  exercise $11$ of chapter $13$ of \cite{Analy}.
\end{proof}

\textit{\textbf{Alternative proof of Theorem $\ref{eq:orderedbasisthm}$}}: 
 Lemma \ref{eq:lemmadixon} gives us that $G(1)$ is a  $\mathbb{Z}_p$-standard group of dimension $|\Phi|+|\Pi|$. Then proposition \ref{eq:fact2} gives that $G(1)$ is a uniform pro-$p$ group. But uniform pro-$p$ groups are $p$-valuable and $p$-saturated (cf. remark after lemma $4.3$ of \cite{Algebras}.) Now, 
 Theorem $\ref{triangulard}$ shows that 
 \begin{center}
 $G(1)=\prod_{{\beta} \in \Phi^-}\overline{x_{\beta}(p)}\prod_{{\delta} \in \Pi}\overline{h_{\delta}(1+p)}\prod_{{\alpha} \in \Phi^+}\overline{x_{\alpha}(p)}$,
 \end{center} 
 where the bars denotes the closure, that is 
 \begin{align*}
  \overline{x_{\beta}(p)}&=\{x_{\beta}(p)^{m_{\beta}}, \forall m_{\beta} \in \mathbb{Z}_p\}=\{x_{\beta}(m_{\beta}p), \forall m_{\beta} \in \mathbb{Z}_p\}, \\
  \overline{x_{\alpha}(p)}&=\{x_{\alpha}(p)^{z_{\alpha}}, \forall z_{\alpha} \in \mathbb{Z}_p\}=\{x_{\alpha}(z_{\alpha}p), \forall z_{\alpha} \in \mathbb{Z}_p\}, \\
  \overline{h_{\delta}(1+p)}&=\{h_{\delta}(1+p)^{n_{\delta}}, \forall n_{\delta} \in \mathbb{Z}_p\}=\{h_{\delta}((1+p)^{n_{\delta}}), \forall n_{\delta} \in \mathbb{Z}_p\}.  
 \end{align*}

 Then remark $\ref{eq:labeluniform}$ finishes the proof of Theorem $\ref{eq:orderedbasisthm}$ [Q.E.D]. 

\section{Iwasawa algebras and relations}

In this section, for $\alpha \in \Phi$ and $\delta \in \Pi$, we identify (as $\mathbb{Z}_p$-module) the Iwasawa algebra of $G(1)$ with the ring of power series in the variables $V_{\alpha}$ and $W_{\delta}$ over $\mathbb{Z}_p$ (cf. \ref{eq:5.1clozel}). This isomorphism is given by sending $1+V_{\alpha} \mapsto x_{\alpha}(p)$ and $1+W_{\delta} \mapsto h_{\delta}(1+p)$. As the Iwasawa algebra is non-commutative, this identification is obviously not a ring isomorphism. Therefore, the goal of this section is to study the product of the variables in wrong order, viewing them as elements of the Iwasawa algebra, and then find the relations among the variables $(\ref{eq:firstrelation}-\ref{eq:forthrelation})$.\\ 
 
 Theorem $\ref{eq:orderedbasisthm}$ gives us an ordered basis of $G(1)$ with the $p$-valuation  $\omega$.  By definition we have a homeomorphism
\begin{center}
$c:\mathbb{Z}_p^d\rightarrow G(1)$\\
$(y_1,...,y_d)\mapsto g_1^{y_1}\cdots g_d^{y_d}$
\end{center}
where $d=|\Phi|+|\Pi|$ and $(g_1,...,g_d)$ is the ordered basis of $G(1)$. Let $C(G(1))$ be continuous functions from $G(1)$ to $\mathbb{Z}_p$. The map $c$ induces, by pulling back functions, an isomorphism of $\mathbb{Z}_p$-modules
\begin{center}
$c^*:C(G(1)) \simeq C(\mathbb{Z}_p^d)$.
\end{center}

We may recall that $\Lambda (G(1))$  is the Iwasawa algebra of $G(1)$ over $\mathbb{Z}_p$ introduced in section $3$ that is $\Lambda (G(1)):=\varprojlim _{N\in \mathcal{N}(G(1))}(G(1)/N)$ where $\mathcal{N}(G(1))$ is the set of open normal subgroups in $G(1)$.  Lemma 22.1 of \cite{Lie} shows that 
 \begin{center}
 $\Lambda (G(1))=Hom_{\mathbb{Z}_p}(C(G(1)),\mathbb{Z}_p)$.
  \end{center}
  So dualizing the map $c^*$, we get an isomorphism of $\mathbb{Z}_p$-modules 
  \begin{center}
  $c_{*}=\Lambda (\mathbb{Z}_p^d)\cong \Lambda (G(1))$.
  \end{center}
  This gives us the following isomorphism of $\mathbb{Z}_p$-modules:
  \begin{align}\label{eq:5.1clozel}
  \tilde{c}:\mathbb{Z}_p[[V_{\alpha},W_{\delta}; \alpha \in \Phi, {\delta} \in \Pi]]&\cong \Lambda(G(1))\\
  1+V_{\alpha}&\mapsto x_{\alpha}(p)\\
  1+W_{{\delta}}& \mapsto h_{{\delta}}(1+p).
  \end{align}
  This is because the Iwasawa algebra of $\mathbb{Z}_p^d$ can be identified with the ring of power series in $d$ variables (cf. prop $20.1$ of  \cite{Lie}). From ($\ref{eq:completedtensor}$),  any $\lambda \in \Lambda(G(1))$ can be written uniquely as 
  \begin{equation}\label{eq:iwasawaequation}
  \lambda =\sum_m \lambda_mV_{-\alpha_{max}}^{m_1}\cdots W_{\delta}^{m_{-}} \cdots V_{\alpha_{max}}^{m_d},
  \end{equation}
  with $\lambda_m \in \mathbb{Z}_p,m=(m_1,...,m_d)\in \mathbb{N}^d$. The product of the above variables 
  \begin{center}
  $V_{-\alpha_{max}}^{m_1}\cdots W_{\delta}^{m_{-}} \cdots V_{\alpha_{max}}^{m_d}:=V_{-\alpha_{max}}^{m_1}\otimes \cdots \otimes W_{\delta}^{m_{-}} \otimes \cdots \otimes V_{\alpha_{max}}^{m_d}$
  \end{center} 
  is taken according to the height function on the roots. We note that we can write unambiguously the variables $V_{\alpha}^{n},W_{\delta}^n;n \geq 0$ because from the discussion after ($\ref{eq:completed2}$) in section $3$, it follows that the convolution of $V_{\alpha}$'s (or  $W_{\delta}$'s) taken $n$ times equals their tensor product, i.e. 
  \begin{center}
  $V_{\alpha}^n=V_{\alpha} \otimes \cdots \otimes V_{\alpha}=V_{\alpha} \ast \cdots \ast V_{\alpha}$.
  \end{center}
   It immediately follows from ($\ref{eq:completed2}$) that the convolution (taken on  $G(1)$) of the distributions $V_{\alpha},W_{\delta}$ equals their tensor product when they are taken in the order of the height function. We simply write, consistent with our previous notation, 
  \begin{center}
  $V_{-\alpha_{max}}^{m_1}\cdots W_{\delta}^{m_{-}} \cdots V_{\alpha_{max}}^{m_d}=V_{-\alpha_{max}}^{m_1}\ast \cdots \ast W_{\delta}^{m_{-}}\ast  \cdots \ast  V_{\alpha_{max}}^{m_d}$,
  \end{center}
  where 
  \begin{equation}
  \label{eq:Iwasawa2}
   V_{\alpha}^{n}=V_{\alpha} \ast V_{\alpha} \ast \cdots \ast V_{\alpha},(n \geq 0)
  \end{equation}
  
  and 
  \begin{equation}
  \label{eq:Iwasawa3}
  W_{\delta}^n=W_{\delta}\ast W_{\delta}\ast \cdots \ast W_{\delta},(n \geq 0).
  \end{equation}

  In the following we study the product of the variables in the wrong order.
  
   Let $b_i:=g_i-1$, $(g_1,...,g_d)$ being the Lazard ordered basis of $G(1)$ and $b^m:=b_1^{m_1}\cdots b_d^{m_d}$, for any multi-index $m=(m_1,...,m_d)\in \mathbb{N}^d$, in $\Lambda(G(1))$. We define a function 
  \begin{center}
  $\tilde{\omega}:\Lambda(G(1))\backslash \{0\}\rightarrow [0,\infty)$
  \end{center}
  by
  \begin{center}
  $\tilde{\omega}\Big(\sum_mc_mb^m\Big):=\inf_m \Big(val_p(c_m)+|m|\Big)$
  \end{center}
  where $c_m \in \mathbb{Z}_p,|m|:=m_1+\cdots + m_l$, with the convention that $\tilde{\omega}(0):=\infty$. 
  
 By the isomorphism $\tilde{c}$, we can identify the variables $V_{\alpha}, W_{{\delta}};(\alpha \in \Phi,\delta \in \Pi)$ as elements of the Iwasawa algebra of $G(1)$. Our goal is to find the  relations among the above variables. For this, we use the Chevalley relations given by Steinberg in \cite{Yale}. With $x_{\alpha}(p)$, $h_{\delta}(1+p)$ as defined in section $2$, \cite{Yale} gives
 
 \begin{equation*}
h_{{\delta}}(1+p)x_{\alpha}(p)h_{{\delta}}(1+p)^{-1}=x_{\alpha}({(1+p)}^{\langle \alpha , {\delta} \rangle }p),(\alpha \in \Phi,\delta \in \Pi)
 \end{equation*}
 where $ \langle \alpha , {\delta} \rangle=2(\alpha,{\delta})/({\delta},{\delta}) \in \mathbb{Z}$ (cf.   \cite{Yale}, p.$30$). It follows that the corresponding relation in the Iwasawa algebra of $G(1)$ is 
 \begin{equation}\label{eq:firstrelation}
 (1+W_{{\delta}})(1+V_{\alpha})={(1+V_{\alpha})}^{(1+p)^{\langle \alpha , {\delta} \rangle}}(1+W_{{\delta}}),(\alpha \in \Phi,\delta \in \Pi).
 \end{equation}
 Let $\alpha_1,\alpha_2 \in \Phi, \alpha_1 \neq -\alpha_2 $ and $\alpha_1+\alpha_2 \notin \Phi$ then example (a), p.$24$ of \cite{Yale} gives 
 \begin{equation*}
 x_{\alpha_1}(p)x_{\alpha_2}(p)=x_{\alpha_2}(p)x_{\alpha_1}(p),(\alpha_1,\alpha_2 \in \Phi).
 \end{equation*}
  Thus, the relation in the Iwasawa algebra is 
  \begin{equation}\label{eq:secondequation}
  (1+V_{\alpha_1})(1+V_{\alpha_2})=(1+V_{\alpha_2})(1+V_{\alpha_1}),(\alpha_1,\alpha_2 \in \Phi).
  \end{equation}
  If $\alpha_1\neq -\alpha_2$ and $\alpha_1+\alpha_2 \in \Phi$ then $x_{\alpha_1}(p)x_{\alpha_2}(p)=[x_{\alpha_1}(p),x_{\alpha_2}(p)]x_{\alpha_2}(p)x_{\alpha_1}(p)$, where $[,]$ is the commutator function. So lemma $15$, p.$22$ of \cite{Yale} gives
  \begin{equation*}
  x_{\alpha_1}(p)x_{\alpha_2}(p)=\Big(\prod_{i,j>0}x_{i\alpha_1+j\alpha_2}(c_{ij}p^ip^j)\Big)x_{\alpha_2}(p)x_{\alpha_1}(p),(\alpha_1,\alpha_2 \in \Phi),
  \end{equation*}
  where $c_{ij} \in \mathbb{Z}$ and the order of the product is as prescribed in lemma $15$ of \cite{Yale}. This gives
  \begin{equation}\label{eq:thirdequation}
(1+V_{\alpha_1})(1+V_{\alpha_2})=\Big(\prod_{i,j>0}(1+V_{i\alpha_1+j\alpha_2})^{c_{ij}p^{i+j-1}}\Big)(1+V_{\alpha_2})(1+V_{\alpha_1}),(\alpha_1,\alpha_2 \in \Phi).
  \end{equation}
If $\alpha_3 \in \Phi^+$, corollary $6$, p.$46$ of \cite{Yale} gives a homomorphism
\begin{center}
$\varphi_{\alpha_3}:SL_2(\mathbb{Q}_p) \rightarrow \langle x_{\alpha_3}(t),x_{-\alpha_3}(t);t \in \mathbb{Q}_p\rangle $
\end{center}
 such that
 \begin{align*}
 \varphi_{\alpha_3}(1+tE_{12})&=x_{\alpha_3}(t),\\
 \varphi_{\alpha_3}(1+tE_{21})&=x_{-\alpha_3}(t),\\
 \varphi_{\alpha_3}(tE_{11}+t^{-1}E_{22})&=h_{\alpha_3}(t),
 \end{align*}
  where $E_{ij}$ is the $2 \times 2$ matrix with $1$ in the $(i,j)^{th}$ entry and zero elsewhere. We have 
\[ \left( \begin{array}{cc}
	1 & p \\
	0 & 1
	\end{array} \right)
	\left( \begin{array}{cc}
	1 & 0 \\
	p & 1
	\end{array} \right)
	=
	\left( \begin{array}{cc}
	1 & 0 \\
	v & 1
	\end{array} \right)
	\left( \begin{array}{cc}
	t & 0 \\
	0 & t^{-1}
	\end{array} \right)
	\left( \begin{array}{cc}
	1 & u \\
	0 & 1
	\end{array} \right)
	\]
where $t=1+p^2,u=p(1+p^2)^{-1},v=p(1+p^2)^{-1}$. As $\varphi_{\alpha_3}$ is a homomorphism,  we therefore obtain
\begin{equation*}
x_{\alpha_3}(p)x_{-\alpha_3}(p)=x_{-\alpha_3}(v)h_{\alpha_3}(t)x_{\alpha_3}(u),(\alpha_3 \in \Phi^+).
\end{equation*}
As before let $\delta_1,...,\delta_l$ be the simple roots where $l=|\Pi|$. Then, as in the proof of Theorem $\ref{triangulard}$, we can uniquely decompose
\begin{center}
$h_{\alpha_3}(t)=\prod_{i=1}^lh_{\delta_i}(t^{n_i})$ for $n_i \in \mathbb{Z}$.
\end{center}
Now, let $P=\frac{log(1+p^2)}{log(1+p)} \in \mathbb{Z}_p, 1+p^2=(1+p)^P$ and let $Q=(1+p^2)^{-1}$ then 
\begin{equation*}
h_{\alpha_3}(t)=h_{\alpha_3}(1+p)^P=\prod_{i=1}^lh_{\delta_i}(1+p)^{n_iP},(\alpha_3 \in \Phi^+,\delta_i \in \Pi).
\end{equation*}
This gives that the corresponding relation in the Iwasawa algebra is 
\begin{equation}\label{eq:forthrelation}
(1+V_{\alpha_3})(1+V_{-\alpha_3})=(1+V_{-\alpha_3})^Q\Big(\prod_{i=1}^l(1+W_{\delta_i})^{n_iP}\Big) (1+V_{\alpha_3})^Q,(\alpha_3 \in \Phi^+,\delta_i \in \Pi).
\end{equation}
Thus we have shown 
\begin{lemma}
With the notations as above, we have the following relations in $\Lambda(G(1))$:
\begin{enumerate}
\item $(1+W_{{\delta}})(1+V_{\alpha})={(1+V_{\alpha})}^{(1+p)^{\langle \alpha , {\delta} \rangle}}(1+W_{{\delta}})$,\\
\item $(1+V_{\alpha_1})(1+V_{\alpha_2})=(1+V_{\alpha_2})(1+V_{\alpha_1}),(\alpha_1+\alpha_2 \notin \Phi)$,\\
\item ${(1+V_{\alpha_1})(1+V_{\alpha_2})=\Big(\prod_{i,j>0}(1+V_{i\alpha_1+j\alpha_2})^{c_{ij}p^{i+j-1}}\Big)(1+V_{\alpha_2})(1+V_{\alpha_1}),  (\alpha_1+\alpha_2 \in \Phi),}$\\
\item $(1+V_{\alpha_3})(1+V_{-\alpha_3})=(1+V_{-\alpha_3})^Q\Big(\prod_{i=1}^l(1+W_{\delta_i})^{n_iP}\Big) (1+V_{\alpha_3})^Q$.
\end{enumerate}
\end{lemma}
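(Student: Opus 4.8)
The plan is to read off each of the four relations from the corresponding Chevalley relation in the group $G(1)$, transported through the $\mathbb{Z}_p$-module isomorphism $\tilde{c}$ of (\ref{eq:5.1clozel}). The one point that needs care is that $\tilde{c}$ is \emph{not} a ring homomorphism, so a priori an identity in $\Lambda(G(1))$ does not follow from one in $\mathbb{Z}_p[[V_\alpha,W_\delta]]$ or conversely. What rescues the argument is that the images $x_\alpha(p)$ and $h_\delta(1+p)$ of $1+V_\alpha$ and $1+W_\delta$ are group-like elements of $\Lambda(G(1))$, i.e.\ the Dirac distributions at the corresponding elements of $G(1)$; since the convolution of two Dirac distributions is the Dirac distribution at the group product, any identity $w_1=w_2$ between words in $x_\alpha(p),h_\delta(1+p)$ holding in the group $G(1)$ holds verbatim, with convolution replacing group multiplication, in $\Lambda(G(1))$. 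Moreover, by the remark following (\ref{eq:completed2}), each of the factor embeddings $\Lambda(N_\beta),\Lambda(T_\delta),\Lambda(N_\alpha)\hookrightarrow\Lambda(G(1))$ is compatible with convolution, so the two sides of each relation may be computed factor by factor and then multiplied in the order prescribed by the height function (\ref{eq:completedtensor}).

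Next I would pin down the meaning of the $\mathbb{Z}_p$-exponents. Because $t\mapsto x_\alpha(t)$ is an additive one-parameter subgroup, $x_\alpha(p)^\lambda=x_\alpha(\lambda p)$ for every $\lambda\in\mathbb{Z}_p$, and under $\tilde{c}$ this is the binomial series $(1+V_\alpha)^\lambda=\sum_{k\ge 0}\binom{\lambda}{k}V_\alpha^{k}$, which converges in $\Lambda(G(1))$ since $\tilde\omega(V_\alpha)=1>0$; likewise $h_\delta(1+p)^\mu=h_\delta((1+p)^\mu)$ corresponds to $(1+W_\delta)^\mu$. One must check that the exponents appearing in (4) actually lie in $\mathbb{Z}_p$: for $p>2$ one has $val_p(\log(1+p))=1$ and $val_p(\log(1+p^2))=2$, so $P=\log(1+p^2)/\log(1+p)$ has valuation $1$ and $Q=(1+p^2)^{-1}$ is a unit, whence $P$, $Q$ and all the $n_iP$ lie in $\mathbb{Z}_p$. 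The exponents $(1+p)^{\langle\alpha,\delta\rangle}$ in (1) and $c_{ij}p^{i+j-1}$ in (3) are plainly $p$-adic integers, and the product in (3) is finite because only finitely many $i\alpha_1+j\alpha_2$ are roots.

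With these preliminaries the four relations are immediate: (1) from $h_\delta(1+p)x_\alpha(p)h_\delta(1+p)^{-1}=x_\alpha((1+p)^{\langle\alpha,\delta\rangle}p)$; (2) from the commutation $x_{\alpha_1}(p)x_{\alpha_2}(p)=x_{\alpha_2}(p)x_{\alpha_1}(p)$ valid when $\alpha_1+\alpha_2\notin\Phi$; (3) from Lemma~$15$ of \cite{Yale}, namely $x_{\alpha_1}(p)x_{\alpha_2}(p)=\big(\prod_{i,j>0}x_{i\alpha_1+j\alpha_2}(c_{ij}p^{i+j})\big)x_{\alpha_2}(p)x_{\alpha_1}(p)$ with the product in the order prescribed there; and (4) from the $SL_2$-computation $x_{\alpha_3}(p)x_{-\alpha_3}(p)=x_{-\alpha_3}(v)h_{\alpha_3}(t)x_{\alpha_3}(u)$ together with the simply-connected decomposition $h_{\alpha_3}(t)=h_{\alpha_3}(1+p)^P=\prod_{i=1}^{l}h_{\delta_i}(1+p)^{n_iP}$ borrowed from the proof of Theorem~\ref{triangulard}. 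I do not expect a genuine obstacle: the mathematical content of the lemma is entirely carried by the group-theoretic identities already established above, and the only thing to be careful about is the bookkeeping of the preceding two paragraphs --- the group-like property of the basis distributions, the continuity of the $\mathbb{Z}_p$-power operations, and the $p$-adic integrality of the exponents in (4).
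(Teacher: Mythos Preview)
Your proposal is correct and follows essentially the same approach as the paper: both derive each relation directly from the corresponding Steinberg--Chevalley group identity and transport it to $\Lambda(G(1))$ via the identification $1+V_\alpha\leftrightarrow x_\alpha(p)$, $1+W_\delta\leftrightarrow h_\delta(1+p)$. If anything, you are more explicit than the paper about why a group identity becomes an Iwasawa-algebra identity (the group-like property of Dirac distributions) and about the $p$-adic integrality of the exponents $P,Q$; the paper simply asserts ``the corresponding relation in the Iwasawa algebra is'' without spelling this out.
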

Consider $\mathcal{A}$ - the universal non-commutative $p$-adic algebra in variables $V_{\alpha}$ and $W_{{\delta}}$, where $\alpha$ varies over the roots and ${\delta}$ varies over the simple roots and the ordering of the variables are given by the height function on the roots to which they correspond.  Thus it
is composed of all non-commutative series 
\begin{center}
$f=\sum_{n\geqslant 0}\sum_ia_ix^i$
\end{center}
where $a_i \in \mathbb{Z}_p$, $x^i:=x_{i(1)}\cdots x_{i(n)}$ and for all $n \geqslant 0$, $i$ runs over all maps
$\{1,2,...,n\}\rightarrow \{1,2,...,d\}$; the monomials $x_i$, with $i$ increasing, are assigned to  the product of the variables among $\{V_{\alpha},W_{{\delta}};\alpha \in \Phi,{\delta} \in \Pi \}$ corresponding to a fixed order compatible with the partial order given by the height function on the roots. The algebra $\mathcal{A}$ has a maximal ideal $\mathcal{M}_{\mathcal{A}}$ generated by $(p,x_1,...,x_d)$ and a prime ideal $\mathcal{P}_{\mathcal{A}}$ generated by $(x_1,...,x_d)$. The topology on $\mathcal{A}$ is given by the powers of $\mathcal{M}_{\mathcal{A}}$. 

Let $\mathcal{R}$ be the closed two-sided ideal generated in $\mathcal{A}$ by the relations
$(\ref{eq:firstrelation}-\ref{eq:forthrelation})$. 
 Let $\overline{\mathcal{A}}$ be the image of the reduction modulo $p$ map on $\mathcal{A}$; i.e. we consider the non-commutative series with coefficients in $\mathbb{F}_p$ with its natural topology given by its maximal ideal $\mathcal{M}_{\overline{\mathcal{A}}}$. 
\begin{lemma}
\label{eq:lemmaclosure}
 Let $\overline{\mathcal{R}}$ be the image of $\mathcal{R}$ in $\overline{\mathcal{A}}$. Then $\overline{\mathcal{R}}$ is the closed two-sided ideal generated in $\overline{\mathcal{A}}$ by the images of the relations $(\ref{eq:firstrelation},\ref{eq:secondequation},\ref{eq:thirdequation},\ref{eq:forthrelation})$.
\end{lemma}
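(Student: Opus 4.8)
The plan is to show that reduction modulo $p$ is compatible with taking closed two-sided ideals generated by a fixed set of elements. Write $S = \{r_1, r_2, r_3, r_4\}$ for the four relations $(\ref{eq:firstrelation}, \ref{eq:secondequation}, \ref{eq:thirdequation}, \ref{eq:forthrelation})$, viewed as elements of $\mathcal{A}$, and let $\overline{S}$ denote their images in $\overline{\mathcal{A}}$. Let $\pi : \mathcal{A} \to \overline{\mathcal{A}}$ be the reduction map. By definition $\mathcal{R}$ is the closure in $\mathcal{A}$ (for the $\mathcal{M}_{\mathcal{A}}$-adic topology) of the algebraic two-sided ideal $\mathcal{R}^{\circ} = \sum_i \mathcal{A} r_i \mathcal{A}$, and similarly let $\overline{\mathcal{R}}'$ denote the closed two-sided ideal of $\overline{\mathcal{A}}$ generated by $\overline{S}$, with $\overline{\mathcal{R}}'^{\circ} = \sum_i \overline{\mathcal{A}}\, \overline{r_i}\, \overline{\mathcal{A}}$ its non-closed version. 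The goal is $\pi(\mathcal{R}) = \overline{\mathcal{R}}'$; the statement of the lemma defines $\overline{\mathcal{R}}$ as $\pi(\mathcal{R})$, so we must identify $\pi(\mathcal{R})$ with $\overline{\mathcal{R}}'$.

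First I would handle the algebraic (non-closed) ideals: since $\pi$ is a surjective ring homomorphism, $\pi\bigl(\mathcal{A} r_i \mathcal{A}\bigr) = \overline{\mathcal{A}}\, \overline{r_i}\, \overline{\mathcal{A}}$, hence $\pi(\mathcal{R}^{\circ}) = \overline{\mathcal{R}}'^{\circ}$ — this is immediate. The content is passing to closures. For the inclusion $\pi(\mathcal{R}) \subseteq \overline{\mathcal{R}}'$: the map $\pi$ is continuous (it sends $\mathcal{M}_{\mathcal{A}}^n$ onto $\mathcal{M}_{\overline{\mathcal{A}}}^n$), and a continuous map sends the closure of a set into the closure of its image, so $\pi(\mathcal{R}) = \pi(\overline{\mathcal{R}^{\circ}}) \subseteq \overline{\pi(\mathcal{R}^{\circ})} = \overline{\overline{\mathcal{R}}'^{\circ}} = \overline{\mathcal{R}}'$. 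For the reverse inclusion $\overline{\mathcal{R}}' \subseteq \pi(\mathcal{R})$: it suffices to check that $\pi(\mathcal{R})$ is a closed two-sided ideal of $\overline{\mathcal{A}}$ containing $\overline{S}$, because then it contains the smallest such, namely $\overline{\mathcal{R}}'$. It is a two-sided ideal since $\pi$ is a surjective ring homomorphism and $\mathcal{R}$ is a two-sided ideal; it contains $\overline{S} = \pi(S)$ since $S \subseteq \mathcal{R}$. The one genuinely non-formal point is that $\pi(\mathcal{R})$ is \emph{closed} in $\overline{\mathcal{A}}$.

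That closedness is where I expect the real work to lie, and the key is completeness together with the fact that $\ker \pi = p\mathcal{A}$ is contained in $\mathcal{M}_{\mathcal{A}}$. Here is the argument I would give: $\mathcal{A}$ is complete for the $\mathcal{M}_{\mathcal{A}}$-adic topology and $\mathcal{R}$ is closed, hence $\mathcal{R}$ is itself complete. The restriction $\pi|_{\mathcal{R}} : \mathcal{R} \to \pi(\mathcal{R})$ is a continuous surjection of complete topological groups; more usefully, since $\ker\pi = p\mathcal{A}$, one has $\pi^{-1}(\pi(\mathcal{R})) = \mathcal{R} + p\mathcal{A}$. Now $\mathcal{R} + p\mathcal{A}$ is closed in $\mathcal{A}$: indeed $\mathcal{R}$ is closed and $p\mathcal{A} = \mathcal{A} \cdot p$ is closed (it is the kernel of the continuous map $\pi$), and in the present filtered setting the sum of the closed ideal $\mathcal{R}$ with the closed ideal $p\mathcal{A} \subseteq \mathcal{M}_{\mathcal{A}}$ is again closed — concretely, if $x_n \in \mathcal{R} + p\mathcal{A}$ converges to $x$, lift each $x_n = r_n + p a_n$, and use that convergence of $x_n$ in the $\mathcal{M}_{\mathcal{A}}$-adic topology together with $p a_n \in \mathcal{M}_{\mathcal{A}}$ lets one adjust the decompositions so that $r_n$ and $p a_n$ each converge, to $r \in \mathcal{R}$ (using $\mathcal{R}$ closed) and $p a \in p\mathcal{A}$ respectively, whence $x = r + pa \in \mathcal{R} + p\mathcal{A}$. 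Since $\pi$ is a quotient map (open, being a surjection of the form $\mathcal{A} \to \mathcal{A}/p\mathcal{A}$ with the quotient topology, which one checks coincides with the $\mathcal{M}_{\overline{\mathcal{A}}}$-adic topology), a subset of $\overline{\mathcal{A}}$ is closed iff its preimage is closed in $\mathcal{A}$; as $\pi^{-1}(\pi(\mathcal{R})) = \mathcal{R} + p\mathcal{A}$ is closed, $\pi(\mathcal{R})$ is closed. This completes both inclusions and hence the proof. The main obstacle, then, is purely the topological bookkeeping around "$\pi(\mathcal{R})$ is closed," which reduces to the standard fact that a sum of two closed submodules of a complete filtered module, one of which sits inside the maximal ideal, is closed.
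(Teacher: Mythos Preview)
Your overall strategy matches the paper's: both arguments establish $\pi(\mathcal{R})\subseteq\overline{\mathcal{R}}'$ by continuity of $\pi$, and then need a closedness/compactness input for the reverse inclusion. Your treatment of the first inclusion and of the algebraic (non-closed) ideals is correct.

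The gap is in your argument that $\pi(\mathcal{R})$ is closed, specifically the claim that $\mathcal{R}+p\mathcal{A}$ is closed. The justification you sketch --- ``adjust the decompositions so that $r_n$ and $pa_n$ each converge'' --- does not work: given $x_n=r_n+pa_n\to x$, there is no mechanism, from completeness alone or from $pa_n\in\mathcal{M}_{\mathcal{A}}$, that lets you choose the $r_n$ to be Cauchy. The decomposition is highly non-unique (since $\mathcal{R}\cap p\mathcal{A}$ can be large), and nothing you have written controls it. The ``standard fact'' you invoke at the end is not a fact: sums of closed submodules of a complete filtered module need not be closed, and the hypothesis that one summand lies in the maximal ideal does not rescue this.

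The fix, and what the paper actually uses, is that $\mathcal{A}$ is \emph{compact}: each quotient $\mathcal{A}/\mathcal{M}_{\mathcal{A}}^N$ is finite, so $\mathcal{A}$ is profinite. Then $\mathcal{R}$, being closed in $\mathcal{A}$, is compact, and $\pi(\mathcal{R})$ is closed as the continuous image of a compact set in the Hausdorff space $\overline{\mathcal{A}}$. The paper phrases this slightly differently --- it lifts an approximating sequence $\overline{f_n}\in\overline{\mathcal{I}}$ to $f_n\in\mathcal{I}\subset\mathcal{R}$ and extracts a convergent subsequence by compactness of $\mathcal{A}$ --- but it is the same idea. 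Once you insert compactness at this step, your argument becomes correct and essentially equivalent to the paper's.
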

\begin{proof}
We follow the proof exactly as in lemma $1.3$ of \cite{Laurent}. For completeness we repeat the proof. We denote by $\mathcal{I} \subset \mathcal{A}$ the \textit{ideal} generated by the relations, let $\mathcal{J} \subset \overline{\mathcal{A}}$ be the similar ideal. Then $\mathcal{J}$ is obviously the image of $\mathcal{I}$ in $\overline{\mathcal{A}}$; we denote it by $\overline{\mathcal{I}}$.

Let $\overline{\mathcal{R}}$ be the reduction of $\mathcal{R}$, and consider the closure $cl(\overline{\mathcal{I}})$ of $\overline{\mathcal{I}}$ in $\overline{\mathcal{A}}$. If $f \in \mathcal{R}$, we have $f=\lim_nf_n(f_n \in \mathcal{I})$ for the topology given by $\mathcal{M}_{\mathcal{A}}^N.$ This implies that $\overline{f}=\lim \overline{f_n}$ for the topology given by $\mathcal{M}_{\overline{\mathcal{A}}}^N$ on $\overline{\mathcal{A}}$, thus $\overline{f}\in cl(\overline{\mathcal{I}}).$ Conversely, assume $\overline{f} \in \overline{\mathcal{A}}$ can be written $\overline{f}=\lim \overline{f_n}$ with $\overline{f_n} \in \overline{\mathcal{I}}$. Then, $\overline{f_n}$ is the reduction of a series $f_n \in \mathcal{I} \subset \mathcal{R}$. Since $\mathcal{R}$ is closed and $\mathcal{A}$ is compact, we may assume that $f_n$ converges to $g \in \mathcal{R}$. Then, by definition of the topologies, $\overline{f}=\lim \overline{f_n}=\overline{g}$. Thus $cl(\overline{\mathcal{I}})=\overline{\mathcal{R}}$, which finishes the proof.
\end{proof}

\section{Presentation of the Iwasawa algebra $\Lambda(G(1))$ for $p>2$}
Our goal in this section is to give an explicit presentation of $\Lambda(G(1))$ (theorem \ref{eq:maintheorem}). The strategy of the proof is to first show the corresponding statement with coefficients in $\mathbb{F}_p$ and then lift the coefficients to $\mathbb{Z}_p$.  Let $$\Omega_{G(1)}=\Lambda(G(1))\otimes_{\mathbb{Z}_p}\mathbb{F}_p$$ be the Iwasawa algebra with finite coefficients as in \cite{Laurent}. We show in theorem \ref{eq:secondimptheorem} that 
for $p>2$, the Iwasawa algebra mod $p$, $\Omega_{G(1)}$, is naturally isomorphic to $\overline{\mathcal{A}}/\overline{\mathcal{R}}$. We first construct a map $\overline{\varphi}:\overline{\mathcal{B}}=\overline{\mathcal{A}}/\overline{\mathcal{R}}\rightarrow \Omega_{G(1)}$ (see corollary $\ref{eq:corollaryimp}$), then using the natural grading (see the discussion before proposition $\ref{eq:propositiononly}$) on $\overline{\mathcal{B}}$, we show that $\dim$ \text{gr}$^n\overline{\mathcal{B}}\leqslant\dim S_n$ (see proposition $\ref{eq:propositiononly}$), where $S_n$ is the space of homogeneous commutative polynomials in the variables $\{V_{\alpha},W_{\delta};\alpha \in \Phi, \delta \in \Pi\}$ over $\mathbb{F}_p$ of degree $n$.\\

The discussion before Theorem $\ref{eq:maintheorem}$ gives up a 
 natural map $\varphi$ from $\mathcal{A} \rightarrow \Lambda(G(1))$. The map $\varphi$ sends $x_i (1 \leq i \leq d;i\ \text{increasing})$ to the  variable $\{V_{\alpha},W_{\delta};\alpha \in \Phi, \delta \in \Pi\}$ corresponding to the order compatible with the height function on the roots.  Let us define
\begin{center}
$\mathcal{M}_{\Lambda}^N:=\{\lambda \in \Lambda(G(1))|\tilde{\omega}(\lambda)\geqslant N\}$.
\end{center}
It follows from Lazard's results \cite{Lazard} (also Schneider, chapter $6$ of \cite{Lie} ) that $\mathcal{M}_{\Lambda}^N$ is indeed the $N^{th}$ power of the maximal ideal $\mathcal{M}_{\Lambda}$ of $\Lambda(G(1))$. 

We consider the filtration of $\mathcal{A}$ given by the powers of its maximal ideal. Then we can define a valuation $w_{\mathcal{A}}$ given by:
\begin{center}
let $f=\sum_ia_ix^i$,\\
then $w_{\mathcal{A}}(f)=\inf_i(val_p(a_i)+|i|)$
\end{center}
where $|i|=n$ is the degree of $i$. 
\begin{lemma}\label{eq:1.6.1clozel}
The natural map $\varphi: \mathcal{A} \rightarrow \Lambda(G(1))$ is surjective and continuous. Moreover for $N\geqslant 0$, we have 
\begin{center}
$\varphi(\mathcal{M}_{\mathcal{A}}^N)\subset \mathcal{M}_{\Lambda}^N$.
\end{center}
\end{lemma}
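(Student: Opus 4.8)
The plan is to establish the three assertions of Lemma~\ref{eq:1.6.1clozel} --- continuity, the inclusion $\varphi(\mathcal{M}_{\mathcal{A}}^N)\subset \mathcal{M}_{\Lambda}^N$, and surjectivity --- essentially in that order, since the first two are bookkeeping and the third is where the real content lies. Continuity is immediate once the inclusion on powers of the maximal ideal is known, because $\{\mathcal{M}_{\mathcal{A}}^N\}_N$ and $\{\mathcal{M}_{\Lambda}^N\}_N$ are neighbourhood bases of $0$ in the respective topologies; so the only genuine points are the inclusion and surjectivity.

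\medskip

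\textbf{The inclusion $\varphi(\mathcal{M}_{\mathcal{A}}^N)\subset \mathcal{M}_{\Lambda}^N$.} First I would record what $\varphi$ does on generators: by the identification $\tilde c$ of $(\ref{eq:5.1clozel})$, $\varphi(V_{\alpha}) = x_{\alpha}(p)-1$ and $\varphi(W_{\delta}) = h_{\delta}(1+p)-1$, and $\varphi(p)=p$. Each of the $d$ elements $b_i := g_i-1$ (for $g_i$ in the Lazard ordered basis of Theorem~\ref{eq:orderedbasisthm}) has $\tilde{\omega}(b_i)\geq 1$: indeed $b_i = g_i - 1$ is, by definition of $\tilde{\omega}$, the coefficient of the single-index monomial, so $\tilde{\omega}(b_i)=1$, and likewise $\tilde{\omega}(p)=\mathrm{val}_p(p)=1$. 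Since $\mathcal{M}_{\mathcal{A}}$ is generated by $p$ and the $x_i$'s, and since $\tilde{\omega}$ is a valuation on $\Lambda(G(1))$ (it is sub-multiplicative for convolution and sub-additive for sums --- this is Lazard's result that $\mathcal{M}_{\Lambda}^N = \{\lambda : \tilde{\omega}(\lambda)\geq N\}$, invoked just before the lemma), a product of $N$ generators maps into $\{\tilde{\omega}\geq N\} = \mathcal{M}_{\Lambda}^N$; taking $\mathbb{Z}_p$-linear combinations and then closures (both $\varphi$-images land in the closed set $\mathcal{M}_{\Lambda}^N$, and $\mathcal{A}$ is compact so $\varphi$ is closed-image-preserving on the pro-$p$ filtration) gives $\varphi(\mathcal{M}_{\mathcal{A}}^N)\subset \mathcal{M}_{\Lambda}^N$. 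Equivalently and more cleanly: one checks $w_{\mathcal{A}}(f)\leq \tilde{\omega}(\varphi(f))$ on monomials and extends, which is precisely the statement $\varphi(\mathcal{M}_{\mathcal{A}}^N)\subset\mathcal{M}_{\Lambda}^N$ since $\mathcal{M}_{\mathcal{A}}^N = \{w_{\mathcal{A}}\geq N\}$.

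\medskip

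\textbf{Surjectivity.} The image $\varphi(\mathcal{A})$ is a closed (by compactness of $\mathcal{A}$ and continuity of $\varphi$) $\mathbb{Z}_p$-subalgebra of $\Lambda(G(1))$ containing $x_{\alpha}(p)$ and $h_{\delta}(1+p)$ for all $\alpha\in\Phi$, $\delta\in\Pi$, hence containing all $x_{\alpha}(p)^{y}$ and $h_{\delta}(1+p)^{y}$ for $y\in\mathbb{Z}_p$ (these are obtained as $\mathcal{M}_{\mathcal{A}}$-adic limits of binomial series in $V_{\alpha}, W_{\delta}$, which converge in $\varphi(\mathcal{A})$ since it is closed). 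By the decomposition $G(1)=\prod_{\beta\in\Phi^-}N_{\beta}\prod_{\delta\in\Pi}T_{\delta}\prod_{\alpha\in\Phi^+}N_{\alpha}$ together with the ordered-basis property, every group element $g=g_1^{y_1}\cdots g_d^{y_d}$ (the Dirac measure $\delta_g$) lies in $\varphi(\mathcal{A})$; then the $\mathbb{Z}_p$-span of the Dirac measures $\{\delta_g : g\in G(1)\}$ is dense in $\Lambda(G(1))$ for the $\mathcal{M}_{\Lambda}$-adic topology --- this is the defining density of the group algebra in its completion --- and since $\varphi(\mathcal{A})$ is closed, it equals $\Lambda(G(1))$. \textbf{The main obstacle} is precisely this last density/closedness argument: one must be careful that the $\mathbb{Z}_p^d$-parametrization $(y_1,\dots,y_d)\mapsto g_1^{y_1}\cdots g_d^{y_d}$ of $G(1)$ (Theorem~\ref{eq:orderedbasisthm}) is compatible, under $c_*$ of $(\ref{eq:5.1clozel})$, with writing $\delta_g$ as the convolution product $\prod_i (1+V_i)^{y_i}$ of binomial power series --- i.e. that the $\mathbb{Z}_p$-module identification $\tilde c$ intertwines the group multiplication on the $G(1)$ side with the appropriately ordered convolution on the power-series side. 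This is exactly the Iwasawa-algebra--of--$\mathbb{Z}_p^d$ computation (prop.\ 20.1 of \cite{Lie}) transported along the ordered basis, and once it is in place surjectivity follows formally.
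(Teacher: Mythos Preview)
Your proposal is correct, but it diverges from the paper's argument in emphasis and in the route to surjectivity.

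For the inclusion $\varphi(\mathcal{M}_{\mathcal{A}}^N)\subset\mathcal{M}_{\Lambda}^N$, the paper invokes Lazard's theorem that $\tilde{\omega}$ is \emph{additive} under convolution, i.e.\ $\tilde{\omega}(\lambda*\mu)=\tilde{\omega}(\lambda)+\tilde{\omega}(\mu)$, and deduces the sharper equality $\tilde{\omega}(\varphi(x^i))=|i|$ by induction on the length of the word. You instead use only sub-multiplicativity of $\tilde{\omega}$ to get $\tilde{\omega}(\varphi(x^i))\geq |i|$. Your weaker input is entirely sufficient for the inclusion (and hence continuity) as stated; the paper's extra precision is not used again in the lemma, so nothing is lost.

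For surjectivity your route is genuinely different and more laborious. The paper dispatches surjectivity in one line by pointing to the expansion~(\ref{eq:iwasawaequation}): every $\lambda\in\Lambda(G(1))$ is already written as a convergent series $\sum_m\lambda_m\,b^m$ in \emph{ordered} monomials, and such a series is visibly the image under $\varphi$ of the same formal series in $\mathcal{A}$ (this is just the $\mathbb{Z}_p$-module isomorphism $\tilde c$ restricted to ordered words). You instead argue that $\varphi(\mathcal{A})$ is a closed subalgebra containing all Dirac measures $\delta_g$ (via binomial series and the ordered-basis decomposition), and then appeal to density of the group ring in its completion. This works, but it reproves by hand what the identification $\tilde c$ in~(\ref{eq:5.1clozel}) already packages. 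The advantage of the paper's approach is brevity; the advantage of yours is that it makes no appeal to the ordered-monomial expansion and would go through in any setting where one only knows the image contains the group elements.
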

\begin{proof}
Surjectivity of the map $\varphi$ is obvious from $(\ref{eq:iwasawaequation}-\ref{eq:Iwasawa3})$. Now, for continuity we use the fact that the valuation $\tilde{\omega}$ is additive i.e.
\begin{center}
$\tilde{\omega}(\lambda * \mu)=\tilde{\omega}(\lambda)+\tilde{\omega}(\mu); \lambda,\mu \in \Lambda(G(1))$.
\end{center}
This is a nontrivial fact proved by Lazard (cf. \cite{Lazard}, III.$2.3.3$ ).

The continuity of the map $\varphi$ is implied by the stronger property
\begin{equation}\label{eq:continuity}
\tilde{\omega}(\varphi(x^i))=n=|i|
\end{equation}
where $n$ is the degree of the monomial. By induction on $n$, this follows from the non-trivial fact that $\tilde{\omega}$ is additive. If $f \in \mathcal{M}_{\mathcal{A}}^N$, we have $w_{\mathcal{A}}(f)\geqslant N$ and the continuity follows from $\ref{eq:continuity}$ by $\mathbb{Z}_p$-linearity. This completes the proof.
\end{proof}
This gives the following two corollaries:
\begin{corollary}
There is a natural continuous surjection
\begin{center}
$\mathcal{B}=\mathcal{A}/\mathcal{R} \rightarrow \Lambda(G(1))$.
\end{center}
\end{corollary}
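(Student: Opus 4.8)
The plan is to deduce the corollary directly from Lemma \ref{eq:1.6.1clozel} together with the computations of Section 5. The only thing that needs to be checked, beyond what Lemma \ref{eq:1.6.1clozel} already gives, is that $\varphi$ kills $\mathcal{R}$, so that it descends to the quotient $\mathcal{B}=\mathcal{A}/\mathcal{R}$.

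First I would observe that $\varphi\colon\mathcal{A}\to\Lambda(G(1))$ is a homomorphism of topological $\mathbb{Z}_p$-algebras, so $\ker\varphi$ is a two-sided ideal of $\mathcal{A}$. Moreover it is a \emph{closed} two-sided ideal: the topology of $\Lambda(G(1))$ is defined by the powers $\mathcal{M}_\Lambda^N$, and since $\tilde{\omega}$ takes finite values on every nonzero element we have $\bigcap_N\mathcal{M}_\Lambda^N=0$, so $\Lambda(G(1))$ is Hausdorff; as $\varphi$ is continuous by Lemma \ref{eq:1.6.1clozel}, $\ker\varphi=\varphi^{-1}(0)$ is closed in $\mathcal{A}$.

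Next I would note that each of the relations $(\ref{eq:firstrelation})$, $(\ref{eq:secondequation})$, $(\ref{eq:thirdequation})$, $(\ref{eq:forthrelation})$, read as the difference of its two sides, lies in $\ker\varphi$. Indeed, these were obtained in Section 5 precisely by transporting Steinberg's Chevalley relations through the identification $\tilde{c}$ and computing inside $\Lambda(G(1))$; that is exactly the statement that $\varphi$ sends each such difference to $0$. Since $\mathcal{R}$ is by definition the smallest closed two-sided ideal of $\mathcal{A}$ containing these elements, and $\ker\varphi$ is a closed two-sided ideal containing them, we conclude $\mathcal{R}\subseteq\ker\varphi$.

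Therefore $\varphi$ factors as $\mathcal{A}\twoheadrightarrow\mathcal{A}/\mathcal{R}=\mathcal{B}\xrightarrow{\ \overline{\varphi}\ }\Lambda(G(1))$. The induced map $\overline{\varphi}$ is surjective because $\varphi$ is (Lemma \ref{eq:1.6.1clozel}), and it is continuous for the quotient topology on $\mathcal{B}$ since $\varphi$ is continuous and the projection $\mathcal{A}\to\mathcal{B}$ is open; the inclusion $\varphi(\mathcal{M}_{\mathcal{A}}^N)\subset\mathcal{M}_\Lambda^N$ descends to the corresponding statement for $\overline{\varphi}$. This gives the asserted natural continuous surjection. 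There is no real obstacle in this step: it is a formal consequence of Lemma \ref{eq:1.6.1clozel} and the definition of $\mathcal{R}$. The substantive work — proving that $\overline{\varphi}$ is in fact an \emph{isomorphism}, i.e.\ that $\mathcal{R}$ exhausts $\ker\varphi$ — is what remains, and it is carried out by reducing mod $p$, bounding $\dim\text{gr}^n\overline{\mathcal{B}}$ by $\dim S_n$, and then lifting back to $\mathbb{Z}_p$.
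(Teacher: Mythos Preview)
Your argument is correct and matches the paper's approach: the paper states this corollary immediately after Lemma~\ref{eq:1.6.1clozel} as a direct consequence (``This gives the following two corollaries''), without further proof. You have simply spelled out the implicit details --- that $\ker\varphi$ is a closed two-sided ideal containing the relations, hence contains $\mathcal{R}$, so $\varphi$ descends --- which is exactly what the paper takes for granted.
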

\begin{corollary}
\label{eq:corollaryimp}
There is a continuous surjection
\begin{center}
$\overline{\varphi}:\overline{\mathcal{B}}=\overline{\mathcal{A}}/\overline{\mathcal{R}}\rightarrow \Omega_{G(1)}.$
\end{center}
\end{corollary}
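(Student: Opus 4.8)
The plan is to produce $\overline{\varphi}$ by first descending the map $\varphi\colon\mathcal{A}\to\Lambda(G(1))$ of Lemma \ref{eq:1.6.1clozel} through the ideal $\mathcal{R}$ over $\mathbb{Z}_p$, and then reducing modulo $p$. Beyond this bookkeeping the only substantive inputs are Lemmas \ref{eq:1.6.1clozel} and \ref{eq:lemmaclosure}, both already available, together with the fact that the relations $(\ref{eq:firstrelation})$, $(\ref{eq:secondequation})$, $(\ref{eq:thirdequation})$, $(\ref{eq:forthrelation})$ were established as genuine identities in $\Lambda(G(1))$ in Section 5.

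First I would observe that each of those four relations was extracted precisely from a Steinberg--Chevalley identity among the elements $x_{\alpha}(p),\,h_{\delta}(1+p)\in G(1)\subset\Lambda(G(1))^{\times}$, and that under $\varphi$ the variable $1+V_{\alpha}$ goes to $x_{\alpha}(p)$, the variable $1+W_{\delta}$ to $h_{\delta}(1+p)$, and, by continuity of $\varphi$ and $\mathbb{Z}_p$-linearity, $(1+V_{\alpha})^{s}$ to $x_{\alpha}(p)^{s}$ for $s\in\mathbb{Z}_p$. Hence $\varphi$ annihilates each of these relations, so it annihilates the (abstract) two-sided ideal they generate; being continuous with $\Lambda(G(1))$ separated, it then annihilates the closure of that ideal, which is $\mathcal{R}$. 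Therefore $\varphi$ factors through a surjective continuous $\mathbb{Z}_p$-algebra homomorphism $\mathcal{B}=\mathcal{A}/\mathcal{R}\to\Lambda(G(1))$ --- this is the first corollary above --- and the inclusion $\varphi(\mathcal{M}_{\mathcal{A}}^{N})\subset\mathcal{M}_{\Lambda}^{N}$ of Lemma \ref{eq:1.6.1clozel} says that this homomorphism sends the image of $\mathcal{M}_{\mathcal{A}}^{N}$ in $\mathcal{B}$ into $\mathcal{M}_{\Lambda}^{N}$.

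Next I would apply $-\otimes_{\mathbb{Z}_p}\mathbb{F}_p$, i.e.\ reduce modulo $p$. The surjection $\mathcal{B}\to\Lambda(G(1))$ yields a surjection $\mathcal{B}/p\mathcal{B}\to\Lambda(G(1))/p\Lambda(G(1))=\Omega_{G(1)}$, so it remains only to identify $\mathcal{B}/p\mathcal{B}$ with $\overline{\mathcal{B}}=\overline{\mathcal{A}}/\overline{\mathcal{R}}$ as topological rings. Since $\overline{\mathcal{A}}=\mathcal{A}/p\mathcal{A}$ and, by definition, $\overline{\mathcal{R}}$ is the image $(\mathcal{R}+p\mathcal{A})/p\mathcal{A}$ of $\mathcal{R}$, one has $\overline{\mathcal{A}}/\overline{\mathcal{R}}=\mathcal{A}/(\mathcal{R}+p\mathcal{A})=\mathcal{B}/p\mathcal{B}$. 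Here Lemma \ref{eq:lemmaclosure} is exactly the point: it guarantees that this $\overline{\mathcal{R}}$ is the closed two-sided ideal generated by the reduced relations, so that the object $\overline{\mathcal{B}}$ named in the statement really is $\mathcal{B}/p\mathcal{B}$ and reduction mod $p$ genuinely commutes with passing to the relation quotient. Composing, $\overline{\varphi}\colon\overline{\mathcal{B}}\to\Omega_{G(1)}$ is the desired continuous surjection; concretely it carries the image of $\mathcal{M}_{\overline{\mathcal{A}}}^{N}$ into the $N$-th power of the maximal ideal of $\Omega_{G(1)}$, by reduction mod $p$ of the inclusion noted at the end of the previous paragraph.

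I do not expect a real obstacle in this corollary as such: the hard analytic content --- Lazard's additivity of $\tilde{\omega}$ --- was already spent in Lemma \ref{eq:1.6.1clozel}, and the lone topological subtlety, that reduction mod $p$ sends the closed ideal $\mathcal{R}$ to the expected closed ideal of $\overline{\mathcal{A}}$, is precisely what the compactness argument of Lemma \ref{eq:lemmaclosure} secures. The single place to stay alert is therefore this interchange of the two quotients, which is why Lemma \ref{eq:lemmaclosure} is invoked rather than taken for granted.
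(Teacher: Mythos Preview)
Your argument is correct and is exactly the approach the paper takes: the paper's proof of this corollary is the single line ``This follows from Lemma \ref{eq:lemmaclosure},'' relying implicitly on the preceding corollary $\mathcal{B}\to\Lambda(G(1))$ obtained from Lemma \ref{eq:1.6.1clozel}. You have simply unpacked those two inputs --- factoring $\varphi$ through $\mathcal{R}$, then reducing mod $p$ and invoking Lemma \ref{eq:lemmaclosure} to match $\mathcal{B}/p\mathcal{B}$ with $\overline{\mathcal{A}}/\overline{\mathcal{R}}$ --- in full detail.
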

This follows from Lemma $\ref{eq:lemmaclosure}$. 

By Abelian distribution theory, $\Omega_{G(1)}$ is, as a space, isomorphic to the commutative power series ring in the variables $V_{\alpha}$ and $W_{{\delta}}$ over $\mathbb{F}_p$  with the compact topology, where $\alpha$ varies over the roots and ${\delta}$ varies over the simple roots. By obvious computation one can show that 
\begin{center}
$\mathcal{M}_{\Omega}^N=\{\lambda \in \Omega_{G(1)}:v_{\Omega}(\lambda)\geqslant N\}$,
\end{center}
$v_{\Omega}$ being the usual valuation on power series, is the reduction of $\mathcal{M}_{\Lambda}^N.$ It is also a (two-sided) ideal, $ \mathcal{M}_{\Omega}$ being the maximal ideal. 

Similarly, in $\mathcal{A}$ we have that the reduction mod $p$ of $\mathcal{M}_{\mathcal{A}}^N$ is the ideal of series 
\begin{center}
$\overline{f}=\sum_ia_ix^i$ with $(a_i \in \mathbb{F}_p)$
\end{center}
such that $|i|\geqslant N$. We obtain the maximal ideal of $\overline{\mathcal{A}}$ by setting $N=1$. Furthermore, we have $(\mathcal{M}_{\overline{\mathcal{A}}})^N=\mathcal{M}_{\overline{\mathcal{A}}}^N.$

In the following we  study  the algebra $\overline{\mathcal{B}}$ using the relations 
$(\ref{eq:firstrelation},\ref{eq:secondequation},\ref{eq:thirdequation},\ref{eq:forthrelation})$ which is used to prove  Proposition $\ref{eq:propositiononly}$ below. Then, we will use it to give the proof of Theorem $\ref{eq:secondimptheorem}$, which in turn, after lifting coefficients to $\mathbb{Z}_p$, leads to the proof of Theorem $\ref{eq:maintheorem}$.

Consider the natural filtration of $\overline{\mathcal{A}}$ by the powers of $\mathcal{M}_{\overline{\mathcal{A}}}$, which we denote by $F^n\overline{\mathcal{A}}$ as in \cite{Laurent}. We have $F^n\overline{\mathcal{A}}/F^{n+1}\overline{\mathcal{A}}=$ gr$^n{\overline{\mathcal{A}}}.$ The filtration $F^n$ induces a filtration on $\overline{\mathcal{B}}$:
\begin{center}
$F^n\overline{\mathcal{B}}=F^n\overline{\mathcal{A}}+\overline{\mathcal{R}}$
\end{center}
and hence a graduation
\begin{center}
gr$^n\overline{\mathcal{B}}=F^n\overline{\mathcal{A}}+\overline{\mathcal{R}}/F^{n+1}\overline{\mathcal{A}} +\overline{\mathcal{R}}$.
\end{center}
Hence, we have 
\begin{center}
gr$^n\overline{\mathcal{B}}=F^n\overline{\mathcal{A}}/F^{n+1}\overline{\mathcal{A}}+(F^n\overline{\mathcal{A}}\cap \overline{\mathcal{R}}).$
\end{center}
Let $S_n$ be the space of homogeneous commutative polynomials in the variables $\{V_{\alpha},W_{\delta};\alpha \in \Phi, \delta \in \Pi\}$ over $\mathbb{F}_p$ of degree $n$. Let $\Sigma_n$ be the corresponding space of homogeneous non-commutative polynomials of degree $n$; hence $\Sigma_n\rightarrow F^n\overline{\mathcal{A}}/F^{n+1}\overline{\mathcal{A}}, $ and therefore $ \Sigma_n\rightarrow $ gr$^n\overline{\mathcal{B}}$, is surjective. 

\begin{proposition}
\label{eq:propositiononly}
We have $\dim$ \text{gr}$^n\overline{\mathcal{B}}\leqslant\dim S_n$.
\end{proposition}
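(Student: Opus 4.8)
The plan is to show that the surjection $\Sigma_n \twoheadrightarrow \mathrm{gr}^n\overline{\mathcal{B}}$ factors through the commutative space $S_n$, i.e.\ that in the associated graded algebra the images of the variables $\{V_\alpha, W_\delta\}$ commute. Concretely, I would establish that for any two variables $x, y$ among $\{V_\alpha, W_\delta\}$, the commutator $xy - yx$ lies in $F^{n+1}\overline{\mathcal{A}} + \overline{\mathcal{R}}$ whenever we are computing in degree $n$ — equivalently, that modulo $\overline{\mathcal{R}}$ the element $xy - yx$ has $\mathcal{M}_{\overline{\mathcal{A}}}$-order strictly larger than $2$. Granting this, a monomial $x^i$ of degree $n$ can be rewritten modulo $F^{n+1}\overline{\mathcal{A}} + \overline{\mathcal{R}}$ as the corresponding \emph{sorted} monomial, so the sorted monomials span $\mathrm{gr}^n\overline{\mathcal{B}}$; since these are indexed by the monomials of degree $n$ in the commuting variables, their number is $\dim S_n$, giving $\dim \mathrm{gr}^n\overline{\mathcal{B}} \le \dim S_n$.

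**Key steps.** First I would reduce the relations $(\ref{eq:firstrelation}-\ref{eq:forthrelation})$ modulo $p$ and rewrite each in terms of the variables $V_\alpha = x_\alpha(p) - 1$, $W_\delta = h_\delta(1+p) - 1$ rather than the group elements $1 + V_\alpha$ etc. Expanding $(1+V_\alpha)^{(1+p)^{\langle\alpha,\delta\rangle}}$, $(1+V_{-\alpha_3})^Q$, $(1+W_{\delta_i})^{n_iP}$ as binomial series, one checks that modulo $p$ the leading deviation from commutativity is of order $\ge 3$: relation $(\ref{eq:secondequation})$ gives $V_{\alpha_1}V_{\alpha_2} - V_{\alpha_2}V_{\alpha_1} = 0$ outright; relation $(\ref{eq:firstrelation})$ gives $W_\delta V_\alpha - V_\alpha W_\delta \in F^3$ because the correction terms carry a factor of $p$ (killed mod $p$) or are products of at least three of the variables; similarly for $(\ref{eq:thirdequation})$, where the product $\prod_{i,j>0}(1+V_{i\alpha_1+j\alpha_2})^{c_{ij}p^{i+j-1}}$ has all correction factors involving either a power of $p$ (since $i+j-1 \ge 1$) or higher-degree monomials, and for $(\ref{eq:forthrelation})$. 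Second, using Lemma \ref{eq:lemmaclosure} — which identifies $\overline{\mathcal{R}}$ with the closed two-sided ideal generated by these reduced relations — I conclude that $xy - yx \in F^3\overline{\mathcal{A}} + \overline{\mathcal{R}}$ for every pair of variables. Third, I run the sorting/straightening argument: given a word $w$ of length $n$, repeatedly apply $xy \equiv yx \pmod{F^{n+1}\overline{\mathcal{A}} + \overline{\mathcal{R}}}$ (the $F^{n+1}$ error terms are absorbed because the relations push the defect up in filtration degree) to bring $w$ into sorted form; this shows $\mathrm{gr}^n\overline{\mathcal{B}}$ is spanned by the images of the sorted degree-$n$ monomials, hence $\dim \mathrm{gr}^n\overline{\mathcal{B}} \le |\{\text{sorted monomials of degree } n\}| = \dim S_n$.

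**Main obstacle.** The delicate point is the bookkeeping in the straightening procedure: when I swap two adjacent variables inside a length-$n$ word, the relation replaces $xy$ not by $yx$ exactly but by $yx$ plus a sum of words of length $\ge 3$ (times possibly other variables on the left and right), so the "error" is a word of total length $\ge n+1$. I must check carefully that this process terminates — i.e.\ that iterating the rewriting does not reintroduce low-degree disorder — and that every error term indeed lands in $F^{n+1}\overline{\mathcal{A}}$, so that it vanishes in $\mathrm{gr}^n\overline{\mathcal{B}}$. This is exactly the kind of argument carried out in \cite{Laurent} for $SL_2$; the extra work here is that there are many variables and the relations $(\ref{eq:thirdequation})$, $(\ref{eq:forthrelation})$ have more complicated right-hand sides, so one must verify uniformly that \emph{every} correction term in \emph{every} relation has $\mathcal{M}_{\overline{\mathcal{A}}}$-degree at least $3$ — the crucial input being that each $p^{i+j-1}$ with $i,j>0$ and each binomial coefficient $\binom{(1+p)^{\langle\alpha,\delta\rangle}}{k}$ for $k \ge 2$ that is not a unit contributes either a factor of $p$ (zero mod $p$) or a monomial of degree $\ge 2$ already, so together with the remaining variable the defect sits in degree $\ge 3$.
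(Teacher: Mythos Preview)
Your proposal is correct and follows essentially the same approach as the paper: first verify, relation by relation, that each pair of generators commutes modulo $F^3\overline{\mathcal{B}}$ (using that $q=(1+p)^{\langle\alpha,\delta\rangle}\equiv 1$, $c_{ij}p^{i+j-1}\equiv 0$, $Q\equiv 1$, $P\equiv 0 \pmod p$), then run the transposition-sorting argument on a general degree-$n$ monomial, noting that each swap produces an error in $F^{n+1}$. The paper's write-up is slightly more explicit in carrying out the four expansions case by case, but the logic and the key estimates are identical to what you outline.
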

\begin{proof}
The case $n=1$ is obvious. We  first prove the proposition for $n=2$ by studying the relations defining $\mathcal{R}$ (or rather $\overline{\mathcal{R}}$) and then we  use  an induction argument to complete the proof.

 For $\alpha \in \Phi$, $\delta \in \Pi$, relation $\ref{eq:firstrelation}$ is
\begin{center}
$(1+W_{{\delta}})(1+V_{\alpha})={(1+V_{\alpha})}^{(1+p)^{\langle \alpha , {\delta} \rangle}}(1+W_{{\delta}}),(\alpha \in \Phi,\delta \in \Pi).$
\end{center}

We set $q=(1+p)^{\langle \alpha , {\delta} \rangle}$, so that $q\equiv 1[p]$. Expanding the above relation we obtain
\begin{center}
$1+W_{{\delta}}+V_{\alpha}+W_{{\delta}}V_{\alpha}=\Big(1+qV_{\alpha}+\frac{q(q-1)}{2}V_{\alpha}^2+\cdots\Big)(1+W_{{\delta}})$.
\end{center}
As $\frac{q(q-1)}{2}\equiv 0[p]$ and $p>2$,
\begin{center}
$1+W_{{\delta}}+V_{\alpha}+W_{{\delta}}V_{\alpha}=(1+qV_{\alpha})(1+W_{{\delta}})+R(V_{\alpha})(1+W_{{\delta}})$
\end{center}
where $R(V_{\alpha})$ has degree $\geqslant 3$. Thus
\begin{center}
$W_{{\delta}}V_{\alpha}=(q-1)V_{\alpha}+q(V_{\alpha}W_{{\delta}})+R_1(V_{\alpha},W_{{\delta}})$
\end{center}
where $R_1(V_{\alpha},W_{{\delta}})$ has degree $\geqslant 3$. Since $q\equiv 1[p]$ we deduce 
\begin{center}
$W_{{\delta}}V_{\alpha}=V_{\alpha}W_{{\delta}}$ in gr$^2\overline{\mathcal{B}}$.
\end{center}
Relation $\ref{eq:secondequation}$ obviously gives for $\alpha_1,\alpha_2 \in \Phi,\alpha_1 \neq -\alpha_2,\alpha_1+\alpha_2 \notin \Phi$,
\begin{center}
$V_{\alpha_1}V_{\alpha_2}=V_{\alpha_2}V_{\alpha_1},(\alpha_1,\alpha_2 \in \Phi)$. 
\end{center}
Relation $\ref{eq:thirdequation}$ is for $\alpha_1,\alpha_2 \in \Phi,\alpha_1 \neq -\alpha_2,\alpha_1+\alpha_2 \in \Phi$,
\begin{center}
$(1+V_{\alpha_1})(1+V_{\alpha_2})=\Big(\prod_{i,j>0}(1+V_{i\alpha_1+j\alpha_2})^{c_{ij}p^{i+j-1}}\Big)(1+V_{\alpha_2})(1+V_{\alpha_1}),$
\end{center}
where $c_{ij} \in \mathbb{Z}$.
Now,
\begin{center}
$(1+V_{i\alpha_1+j\alpha_2})^{c_{ij}p^{i+j-1}}=1+cV_{i\alpha_1+j\alpha_2}+\frac{c(c-1)}{2}V_{i\alpha_1+j\alpha_2}^2+\cdots $
\end{center}
where $c=c_{ij}p^{i+j-1}$. It is easy to see that $p|c$ if $i>0$ and $j>0$. As $p>2$, and $2$ is invertible in $\mathbb{Z}_p$ we deduce
\begin{center}
$(1+V_{i\alpha_1+j\alpha_2})^{c_{ij}p^{i+j-1}}\equiv 1[\mod F^3\overline{\mathcal{B}}]$.
\end{center}
This implies
\begin{center}
$V_{\alpha_1}V_{\alpha_2}=V_{\alpha_2}V_{\alpha_1}$ in gr$^2\overline{\mathcal{B}}$. 
\end{center}
Relation $\ref{eq:forthrelation}$ for $\alpha_3 \in \Phi^+$ is the following:
\begin{center}
$(1+V_{\alpha_3})(1+V_{-\alpha_3})=(1+V_{-\alpha_3})^Q\Big(\prod_{i=1}^l(1+W_{\delta_i})^{n_iP}\Big) (1+V_{\alpha_3})^Q,(\alpha_3 \in \Phi^+,\delta_i \in \Pi).$
\end{center}
The constant $Q=(1+p^2)^{-1}\equiv 1[p^2], \frac{Q(Q-1)}{2}\equiv 0[p]$. As $P=\frac{log(1+p^2)}{log(1+p)}\equiv p[p^2]$, $p>2$,  $2$ is invertible in $\mathbb{Z}_p$, we get that 
\begin{center}
$(1+W_{\delta_i})^P\equiv 1[\mod (p,W_{\delta_i}^3)],(\delta_i \in \Pi)$.
\end{center}
Hence, modulo $F^3\overline{\mathcal{B}}$, relation  $\ref{eq:forthrelation}$ reduces to
\begin{center}
$(1+V_{\alpha_3})(1+V_{-\alpha_3})=1+QV_{-\alpha_3}+QV_{\alpha_3}+Q^2V_{-\alpha_3}V_{\alpha_3}[\mod F^3\overline{\mathcal{B}}],(\alpha_3 \in \Phi^+)$.
\end{center}
Therefore, as $Q\equiv 1[p^2]$ we obtain 
\begin{center}
$V_{\alpha_3}V_{-\alpha_3}\equiv V_{-\alpha_3}{V_{\alpha_3}}$ in gr$^2\overline{\mathcal{B}}$ for $\alpha_3 \in \Phi^+$.
\end{center}
This proves
 \begin{center}
$\dim$ gr$^2\overline{\mathcal{B}}\leqslant\dim S_2$.
\end{center}

Now, consider an arbitrary monomial of degree $n$,
\begin{center}
$x^i=x_{i_1}...x_{i_n}$.
\end{center}

As in Lemma $3.2$ of \cite{Laurent}, we can change $x^i$ into a well-ordered monomial ($b \mapsto i_b$ increasing) by a sequence of transpositions. Consider a move $(b,b+1)\mapsto (b+1,b)$ and assume $i_b>i_{b+1}$. We write 
\begin{center}
$x^i=x^fx_{i_b}x_{i_{b+1}}x^e$
\end{center}
where deg$(f)=r$, deg$(e)=s$ and deg$(i)=n$. Then, from the proof of Proposition $\ref{eq:propositiononly}$ we have $x_{i_b}x_{i_{b+1}}=x_{i_{b+1}}x_{i_b}[F^3\overline{\mathcal{B}}]$. Hence, we obtain $x^fx_{i_{b+1}}x_{i_b}x^e\equiv x^i[F^{n+1}], n=r+s+2$. This reduces the number of inversions in $x^i$. 

This completes the proof of proposition $\ref{eq:propositiononly}$. 
\end{proof}

In the following we state our main theorems of this paper. After proposition \ref{eq:propositiononly} their proofs directly follow from \cite{Laurent}. But for convenience of the reader we include the proofs.

\begin{theorem}
\label{eq:secondimptheorem}
For $p>2$, the Iwasawa algebra mod $p$, $\Omega_{G(1)}$, is naturally isomorphic to $\overline{\mathcal{A}}/\overline{\mathcal{R}}$.
\end{theorem}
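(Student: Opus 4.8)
The plan is to show that the continuous surjection $\overline{\varphi}\colon\overline{\mathcal B}=\overline{\mathcal A}/\overline{\mathcal R}\to\Omega_{G(1)}$ constructed in Corollary \ref{eq:corollaryimp} is in fact an isomorphism. Since it is already known to be surjective and continuous, it suffices to prove injectivity, and I would deduce this from a dimension count on the associated graded rings. Concretely, the map $\overline{\varphi}$ is filtered: it sends $F^n\overline{\mathcal B}$ into $\mathcal M_{\Omega}^n$ (this follows from Lemma \ref{eq:1.6.1clozel} after reduction mod $p$, since $\varphi(\mathcal M_{\mathcal A}^N)\subset\mathcal M_{\Lambda}^N$ and $\mathcal M_{\Omega}^N$ is the reduction of $\mathcal M_{\Lambda}^N$). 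Hence it induces a graded surjection $\operatorname{gr}^n\overline{\varphi}\colon\operatorname{gr}^n\overline{\mathcal B}\to\operatorname{gr}^n\Omega_{G(1)}$ for every $n\geq 0$.

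The key numerical input is Proposition \ref{eq:propositiononly}, which gives $\dim_{\mathbb F_p}\operatorname{gr}^n\overline{\mathcal B}\leq\dim_{\mathbb F_p}S_n$. On the other hand, by Abelian (commutative) distribution theory $\Omega_{G(1)}$ is, as a filtered $\mathbb F_p$-space, the commutative power series ring in the $d=|\Phi|+|\Pi|$ variables $\{V_\alpha,W_\delta\}$ with $v_\Omega$ its order filtration, so $\operatorname{gr}^n\Omega_{G(1)}$ is exactly $S_n$, the space of homogeneous commutative polynomials of degree $n$; in particular $\dim_{\mathbb F_p}\operatorname{gr}^n\Omega_{G(1)}=\dim_{\mathbb F_p}S_n$. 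Combining: the surjection $\operatorname{gr}^n\overline{\varphi}$ has source of dimension $\leq\dim S_n$ and target of dimension exactly $\dim S_n$, forcing $\dim\operatorname{gr}^n\overline{\mathcal B}=\dim S_n$ and $\operatorname{gr}^n\overline{\varphi}$ to be an isomorphism for all $n$.

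To pass from the graded statement to the ungraded one I would argue that both $\overline{\mathcal B}$ and $\Omega_{G(1)}$ are complete and separated for their respective filtrations — $\Omega_{G(1)}$ manifestly so as a power series ring, and $\overline{\mathcal B}$ because it is the quotient of the complete algebra $\overline{\mathcal A}$ by the \emph{closed} ideal $\overline{\mathcal R}$ (Lemma \ref{eq:lemmaclosure}), so the induced filtration is separated and complete. A standard filtered-ring lemma then says that a filtered map between complete separated filtered modules which is an isomorphism on each graded piece is itself an isomorphism: surjectivity is already known, and for injectivity, if $\overline{\varphi}(x)=0$ with $x\neq 0$ pick $n$ with $x\in F^n\overline{\mathcal B}\setminus F^{n+1}\overline{\mathcal B}$; then the class of $x$ in $\operatorname{gr}^n\overline{\mathcal B}$ is nonzero but maps to $0$ in $\operatorname{gr}^n\Omega_{G(1)}$, contradicting that $\operatorname{gr}^n\overline{\varphi}$ is injective. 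Hence $\overline{\varphi}$ is an isomorphism, which is the assertion of the theorem.

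The genuine content — and the step I expect to be the main obstacle — is Proposition \ref{eq:propositiononly}, i.e.\ the inequality $\dim\operatorname{gr}^n\overline{\mathcal B}\leq\dim S_n$; but this is established before the theorem, so here it may be invoked. Everything else (the filtered-ring lemma, completeness, the identification of $\operatorname{gr}\Omega_{G(1)}$ with $\bigoplus_n S_n$) is formal. One small point to be careful about is checking that $\overline{\varphi}$ genuinely respects the filtrations as claimed, i.e.\ that it does not \emph{strictly} increase order, so that $\operatorname{gr}^n\overline{\varphi}$ is well-defined and remains surjective; this is exactly what $\tilde\omega(\varphi(x^i))=|i|$ from \eqref{eq:continuity} guarantees, after reduction mod $p$.
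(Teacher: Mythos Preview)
Your proposal is correct and follows essentially the same route as the paper's own proof: reduce to the associated graded via Proposition~\ref{eq:propositiononly} and the identification $\operatorname{gr}^n\Omega_{G(1)}\cong S_n$, then conclude by completeness of $\overline{\mathcal B}=\overline{\mathcal A}/\overline{\mathcal R}$ (closedness of $\overline{\mathcal R}$) using the standard filtered--complete lemma. The paper cites \cite{Zaras} for that last step, whereas you spell out the elementary injectivity argument directly; your extra care about $\tilde\omega(\varphi(x^i))=|i|$ to ensure $\operatorname{gr}\overline{\varphi}$ is well-defined and surjective is a point the paper leaves implicit.
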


\begin{proof}
  The natural map $\varphi: \mathcal{A} \rightarrow \Lambda(G(1))$ sends $\mathcal{M}_{\mathcal{A}}^n$ to $\mathcal{M}_{\Lambda}^n$. As $F^{\bullet}$ on $\overline{\mathcal{B}}$ is the filtration inherited from the natural filtration on $\overline{\mathcal{A}}$, we see that $\overline{\varphi}$ sends $F^n\overline{\mathcal{B}}$ to $\mathcal{M}_{\Omega}^n$.  As $\overline{\varphi}$ is surjective, the natural map 
\begin{center}
gr$\overline{\varphi}:$ gr$^{\bullet}\overline{\mathcal{B}}\rightarrow $ gr$^{\bullet}\Omega_{G(1)}$
\end{center}
is surjective. Moreover, it is an isomorphism because $\dim $ gr$^n\overline{\mathcal{B}}\leqslant \dim S_n =\dim $ gr$^n\Omega_{G(1)}$. (The last equality follows from the discussion after corollary $\ref{eq:corollaryimp}$, see also Theorem $7.24$ of \cite{Analy}). Since the filtration on $\overline{\mathcal{B}}$ is complete, we deduce that $\overline{\varphi}$ is an isomorphism (cf. Theorem $4$ ($5$), p.$31$ of \cite{Zaras}). We have $\overline{\mathcal{B}}$ complete because $\overline{\mathcal{B}}=\overline{\mathcal{A}}/\overline{\mathcal{R}}$, where $\overline{\mathcal{R}}$ is closed and therefore complete for the filtration induced from that of $\overline{\mathcal{A}}$.
\end{proof}
\begin{theorem}
\label{eq:maintheorem}
For $p>2$, the Iwasawa algebra $\Lambda(G(1))$is naturally isomorphic to $\mathcal{A}/\mathcal{R}$. 
\end{theorem}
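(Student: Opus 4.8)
The plan is to lift the mod-$p$ isomorphism of Theorem~\ref{eq:secondimptheorem} to a $\mathbb{Z}_p$-coefficient statement using a completeness-and-reduction argument, exactly as in Clozel's treatment of $\Gamma_1(SL_2(\mathbb{Z}_p))$. We already have the natural continuous surjection $\varphi\colon\mathcal{B}=\mathcal{A}/\mathcal{R}\to\Lambda(G(1))$ (the first corollary after Lemma~\ref{eq:1.6.1clozel}), so the whole content is injectivity. The idea is to filter both sides $p$-adically (or by the maximal ideal), observe that reduction mod $p$ of $\mathcal{B}$ is $\overline{\mathcal{B}}=\overline{\mathcal{A}}/\overline{\mathcal{R}}$ by Lemma~\ref{eq:lemmaclosure}, and reduce injectivity of $\varphi$ to injectivity of $\overline{\varphi}$, which is Theorem~\ref{eq:secondimptheorem}.

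In more detail, first I would record that $\varphi$ is a map of complete filtered $\mathbb{Z}_p$-modules: $\mathcal{A}$ is $\mathcal{M}_{\mathcal{A}}$-adically complete, $\mathcal{R}$ is closed so $\mathcal{B}$ is complete for the quotient filtration, and $\Lambda(G(1))$ is complete for the $\mathcal{M}_{\Lambda}$-adic filtration (Lazard). Lemma~\ref{eq:1.6.1clozel} gives $\varphi(\mathcal{M}_{\mathcal{A}}^N)\subset\mathcal{M}_{\Lambda}^N$, hence a filtered map $\mathcal{B}\to\Lambda(G(1))$. The key point is that $\Lambda(G(1))$ is $p$-torsion-free (it is an integral domain by Lazard's theory, using $p>2$ so that $G(1)$ has no torsion), and reduction mod $p$ of the exact diagram gives precisely $\overline{\varphi}\colon\overline{\mathcal{B}}\to\Omega_{G(1)}=\Lambda(G(1))\otimes_{\mathbb{Z}_p}\mathbb{F}_p$, which is an isomorphism by Theorem~\ref{eq:secondimptheorem}.

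Then the lifting is a standard two-step argument. Suppose $b\in\mathcal{B}$ with $\varphi(b)=0$ but $b\neq 0$; write $v_{\mathcal{B}}(b)$ for its filtration level. Since $\overline{\varphi}$ is injective and $b$ reduces mod $p$ to an element of $\overline{\mathcal{B}}$, if $b\notin p\mathcal{B}$ then $\overline{b}\neq 0$ and $\overline{\varphi}(\overline{b})=\overline{\varphi(b)}=0$, a contradiction; hence $b\in p\mathcal{B}$, say $b=pb_1$. But $\Lambda(G(1))$ is $p$-torsion-free and $\mathcal{B}$ maps onto it, so $\varphi(pb_1)=p\varphi(b_1)=0$ forces $\varphi(b_1)=0$; iterating, $b\in\bigcap_N p^N\mathcal{B}$. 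Since $\mathcal{B}$ is complete and separated for a filtration compatible with the $p$-adic one (the maximal ideal $\mathcal{M}_{\mathcal{B}}$ contains $p$, and $\mathcal{M}_{\mathcal{B}}^N$ is separated), $\bigcap_N p^N\mathcal{B}\subset\bigcap_N\mathcal{M}_{\mathcal{B}}^N=0$, so $b=0$. Combined with surjectivity this proves $\varphi$ is an isomorphism. Equivalently, and perhaps more cleanly, one can argue on associated graded objects: $\mathrm{gr}\,\varphi$ modulo $p$ is $\mathrm{gr}\,\overline{\varphi}$, an isomorphism; a diagram chase with the snake lemma on $0\to\mathrm{gr}\,\mathcal{B}\xrightarrow{p}\mathrm{gr}\,\mathcal{B}\to\mathrm{gr}\,\overline{\mathcal{B}}\to 0$ and the corresponding sequence for $\Lambda(G(1))$ shows $\mathrm{gr}\,\varphi$ is an isomorphism, and then completeness of the filtration on $\mathcal{B}$ upgrades this to $\varphi$ itself being an isomorphism (again by Theorem~4(5), p.~31 of \cite{Zaras}).

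The main obstacle is not any single hard computation but making the torsion-freeness and completeness bookkeeping airtight: one must be sure that $\Lambda(G(1))$ really is $p$-torsion-free (this is where $p>2$ and simple-connectedness enter, via $G(1)$ being $p$-valuable with no torsion, so $\mathrm{gr}\,\Lambda(G(1))$ is a polynomial ring over $\mathbb{F}_p$ hence a domain), and that the filtration on $\mathcal{B}=\mathcal{A}/\mathcal{R}$ inherited from $\mathcal{M}_{\mathcal{A}}$ is both complete and separated after quotienting by the \emph{closed} ideal $\mathcal{R}$ — the closedness of $\mathcal{R}$, used already in Lemma~\ref{eq:lemmaclosure}, is exactly what guarantees this. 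Once these structural facts are in place the implication ``$\overline{\varphi}$ iso $\Rightarrow$ $\varphi$ iso'' is formal, and this is why the authors can say the proof ``directly follows from \cite{Laurent}.''
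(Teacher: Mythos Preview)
Your proposal is correct and follows essentially the same approach as the paper: both argue injectivity by iteratively reducing mod $p$, invoking the mod-$p$ isomorphism of Theorem~\ref{eq:secondimptheorem} together with $p$-torsion-freeness of $\Lambda(G(1))$, and concluding via closedness of $\mathcal{R}$ (equivalently, completeness of $\mathcal{B}$). The only cosmetic difference is that the paper carries out the iteration in $\mathcal{A}$ (writing $f=r_n+p^nf_n$ with $r_n\in\mathcal{R}$ and using $p^nf_n\to 0$) rather than in the quotient $\mathcal{B}$, and leaves the $p$-torsion-freeness of $\Lambda(G(1))$ implicit where you make it explicit.
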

\begin{proof}
  The reduction of $\varphi$ is $\overline{\varphi}$. We recall that $\overline{\mathcal{R}}$ is the image of $\mathcal{R}$ in $\overline{\mathcal{A}}$. Let $f \in \mathcal{A}$ satisfies $\varphi(f)=0$. We then have $\overline{f} \in \overline{\mathcal{R}}$ since $\overline{\mathcal{A}}/\overline{\mathcal{R}}\cong \Omega_{G(1)}$. So $f=r_1+pf_1, r_1 \in \mathcal{R},f_1 \in \mathcal{A}.$ Then $\varphi(f_1)=0$. Inductively, we obtain an expression $f=r_n+p^nf_n$ of the same type. Since $p^nf_n\rightarrow 0$ in $\mathcal{A}$ and $\mathcal{R}$ is closed, we deduce that $f \in \mathcal{R}$. 
\end{proof}
In conclusion for $p>2$ and $G$ simply connected, we have found a Lazard basis of $G(1)$ with respect to its $p$-valuation $\omega$ (see theorem $\ref{eq:orderedbasisthm}$). Furthermore, we have used it to construct explicit generators and relations for the Iwasawa algebra $\Omega_{(G(1))}$ with coefficients in $\mathbb{F}_p$ (see theorem $\ref{eq:secondimptheorem}$) and then we have lifted the coefficients to $\mathbb{Z}_p$ to construct the generators and relations of $\Lambda(G(1))$ (see theorem $\ref{eq:maintheorem}$). We might use the explicit presentation of the Iwasawa algebra, although we haven't done it already, to study the center of the Iwasawa algebra of $G(1)$ as done by Clozel in \cite{Laurent} for $SL_2(\mathbb{Z}_p)$, which will then provide a different proof of the results of Ardakov \cite{Ardakov}.

\bibliographystyle{amsplain}

\end{document}